\newtheorem{theorem}{Theorem}[section]
\newtheorem{lemma}[theorem]{Lemma}
\newtheorem{corollary}[theorem]{Corollary}
\newtheorem{proposition}[theorem]{Proposition}
\newlist{Cases}{enumerate}{9}
\setlist[Cases,1]{label={\sc Case {\rm \arabic*}},wide, labelwidth=!, labelindent=0pt}
\setlist[Cases,2]{label*= {\rm .\arabic*},wide, labelwidth=!, labelindent=0pt}
\setlist[Cases,3]{label*= {\rm .\arabic*},wide, labelwidth=!, labelindent=0pt}
\setlist[Cases,4]{label*= {\rm .\arabic*},wide, labelwidth=!, labelindent=0pt}
\setlist[Cases,5]{label*= {\rm .\arabic*},wide, labelwidth=!, labelindent=0pt}
\setlist[Cases,6]{label*= {\rm .\arabic*},wide, labelwidth=!, labelindent=0pt}
\setlist[Cases,7]{label*= {\rm .\arabic*},wide, labelwidth=!, labelindent=0pt}
\setlist[Cases,8]{label*= {\rm .\arabic*},wide, labelwidth=!, labelindent=0pt}
\setlist[Cases,9]{label*= {\rm .\arabic*},wide, labelwidth=!, labelindent=0pt}
\newcommand*{\ineq}[2][]{%
  \begingroup
    \refstepcounter{equation}%
    \ifx\\#1\\%
    \else
      \label{#1}%
    \fi
    \relpenalty=10000 %
    \binoppenalty=10000 %
    \@eqnnum \ \ensuremath{%
      \hskip 20pt #2%
    }%
  \endgroup
}
\newcommand{\done}{\color{blue}}
\newcommand{\longproof}[1]{{\color{orange}#1}}
\renewcommand{\longproof}[1]{}
\newcommand{\shortproof}[1]{{#1}}
\renewcommand{\done}{}
\newcommand{\reread}{\color{orange}}
\renewcommand{\reread}{}
\newcommand{\mchb}[3]{{\uparrow}^{#1} _{#2} #3}
\newcommand{\good}[3]{{\rm G}^{{\mathcal {#1}}}_{#3}(#2)}
\newcommand{\goodt}[2]{{\rm G}_\infty^{\mathcal #1} (#2)}
\newcommand{\goodp}[4]{{\rm G}^{\mathcal #1}_{#3}(#2|#4)}
\newcommand{\goodpt}[3]{{\rm G}^{\mathcal #1}_{\infty}(#2|#3)}
\newcommand{\norm}[1]{\|#1\|}
\newcommand{\nf}[2]{\text{\sc nf}_{#1}(#2)}
\newcommand{\val}[1]{|#1|}
\newcommand{\bch}[2]{ {\uparrow #2  } #1 }
\newcommand{\bcn}[2]{ {\uparrow   } #1 }
\newcommand{\nnf}{=_k}
\newcommand{\gknfsym}{=}
\newcommand{\nfpar}[1]{\gknfsym_{#1}}
\newcommand{\gknf}{\nfpar{k} }
\newcommand{\fs}[2]{#1[#2]}
\newcommand{\mc}[1]{{\rm mc}(#1)}
\newcommand{\bases}[2]{^{#2}_{#1}}
\newcommand{\putaway}[1]{}
\newcommand{\ve}{\varepsilon}
\newcommand{\al}{{\alpha}}
\newcommand{\be}{{\beta}}
\newcommand{\om}{\omega}
\newcommand{\longversion}[1]{}
\theoremstyle{definition}
\newtheorem{definition}[theorem]{Definition}
\newtheorem{example}[theorem]{Example}
\theoremstyle{remark}
\newtheorem{remark}[theorem]{Remark}
\numberwithin{equation}{section}
\begin{document}

\title{Complete Intuitionistic Temporal Logics for Topological Dynamics}

\author[Fern\'andez and Weiermann]{David Fern\'{a}ndez-Duque${}^{1}$ \and Andreas Weiermann${}^{2}$}
\address[1]{Department of Philosophy\\
University of Barcelona\\
\email{Fern\'andez-Duque}{fernandez-duque@ub.edu}}
\address[2]{Department of Mathematics: Analysis, Logic and Discrete Mathematics\\
Ghent University\\
\email{Weiermann}{Andreas.Weiermann@UGent.be}}

\title{A Walk with Goodstein}



\subjclass[2010]{Primary 03F40, 03D20, 03D60}

\keywords{Goodstein's theorem, proofs of independence, fast-growing functions}




\done
\begin{abstract}
Goodstein's principle is arguably the first purely number-theoretic statement known to be independent of Peano arithmetic.
It involves sequences of natural numbers which at first appear to diverge, but eventually decrease to zero.
These sequences are defined relative to a notation system based on exponentiation for the natural numbers.
In this article, we provide a self-contained and modern analysis of Goodstein's principle, obtaining some variations and improvements.
We explore notions of optimality for notation systems and apply them to the classical Goodstein process and to a weaker variant based on multiplication rather than exponentiation.
In particular, we introduce the notion of {\em base-change maximality,} and show how it leads to far-reaching extensions of Goodstein's result.
We moreover show that by varying the initial base of the Goodstein process, one readily obtains independence results for each of the fragments ${\sf I}\Sigma_n$ of Peano arithmetic.
\end{abstract}

\maketitle

\section{Introduction}

Ever since G\"odel's first incompleteness theorem \cite{Godel1931}, we know that Peano arithmetic ($\sf PA$) cannot prove every true arithmetical statement.
However, G\"odel's proof is based on a specifically constructed statement that can be argued to be artificial from the perspective of mainstream mathematics.
Since then, several facts of a purely combinatorial or number-theoretic nature have been shown to be independent from $\sf PA$ \cite{
Clote,Erickson,Hajek,Kanamori,KirbyFlipping,ParisHarrington}, but the oldest example is a theorem of Goodstein \cite{Goodstein1944}, although it was only shown to be independent much later by Kirby and Paris~\cite{Kirby}; see~\cite{Rathjen2015} for a historical overview.
Goodstein's result will be the focus of this work.

Informally, one writes a natural number $m$ in hereditary base $2$, meaning that $m$ is represented in base $2$ in the usual way, then so is each exponent that appears, and so on. A precise definition will be given later, but for example, $m=20$ would be written as $2^{2^2} + 2^2 $.
The Goodstein process based on $m$ is a sequence $(\good {{}}mi )_{i <\alpha}$ with $\alpha \leq \infty$, such that $\good {{}}m0 = m$ and, if $\good {{}}mi $ is defined and positive, $\good {{}}m{i+1}$ is obtained by first writing $ \good {{}}mi $ in hereditary base $i+2$, then replacing every instance of $i+2$ by $i+3$, and finally subtracting $1$.
The sequence terminates if it reaches zero.
Thus for example, 
\[\good {{}}{20}1 = 3^{3^3}  +3^3 - 1 = 3^{3^3} +  3^2\cdot 2 + 3\cdot 2 +2.\]
This number is already large enough to be rather cumbersome to write out and, in fact, the sequence will grow very rapidly for some time.
This should make Goodstein's principle quite surprising: for any $m$ that we start with, there will be a value of $i$ such that $\good {{}}mi=0$.
The proof uses transfinite induction, and Kirby and Paris showed that this was, in a precise sense, unavoidable, leading to unprovability in $\sf PA$ \cite{Kirby}.

A natural question to ask is if this particular way of writing natural numbers is `canonical' in some way. For example, we could just as well have written $20=2^{2^2} + 2 + 2 $.
This would lead to a different candidate for $\good {}{20}1$; namely, $3^{3^3} + 3 +   2 $.
Is there some sense in which the standard representation of $20$ is preferable?
Will the Goodstein process still terminate if we choose a different representation of each natural number?

Remarkably, the answer to both of these questions is `yes'.
In fact, the two questions are intimately connected, as we will see throughout the paper.
Regarding the first question, we identify two potential criteria for a canonical system of normal forms: first, it could be {\em norm minimizing,} meaning that we use the least possible number of symbols to write a number.
Second, it could be {\em base-change maximal,} which roughly states that $\good {} m {i+1}$ will be as large as possible given $\good {}mi$.
While the first property is arguably more intuitive, the second turns out to be surprisingly useful.
In particular, termination for a Goodstein process based on a base-change maximal notation system implies that any other notation system (based on the same primitive functions) will also yield a terminating Goodstein process.

As we will see, the hereditary exponential normal form for natural numbers enjoys both norm minimization and base-change maximality.
This tells us that every {\em Goodstein walk} is finite, by which we mean every sequence of numbers $(m_i)_{i=0}^\alpha$, where $m_{i+1}$ is obtained by writing $m_i$ in an arbitrary fashion using addition and base-$(i+2)$ exponentiation, then replacing every instance of $i+2$ by $i+3$ and subtracting one.
We may even use multiplication, which is not needed for Goodstein's original theorem, even though norm minimality fails when multiplication is involved.
We will formalize and prove these results in Section~\ref{secWalks}.

We also consider some variants of Goodstein's principle of lower proof-theoretic strength.
For each $n\geq 1$, recall that ${\sf I}\Sigma_n$ is the fragment of $\sf PA$ where induction is restricted to $\Sigma_n$ formulas.
First, we consider notations based on addition and multiplication, but not exponentiation.
This leads to an independence result for ${\sf I}\Sigma_1$; see Section~\ref{secWeak} for details.
Finally, we show that by varying the initial base (i.e., rather than writing $m$ in hereditary exponential base $2$, we use a different base $r\geq 2$) but restricting $m$, we may obtain a parametrized version of Goodstein's principle which provides independence results for each ${\sf I}\Sigma_n$ with $n\geq 1$.
These parametrized Goodstein principles are detailed in Section~\ref{secPhase}.
Meskens and Weiermann~\cite{MeskensWeiermann} have also shown independence results for ${\sf I}\Sigma_n$ based on Goodstein principles, albeit our approach is quite different: we consider Goodstein processes where only the initial base is modified, while they consider sequences with slowly changing bases.
Our presentation is mostly self-contained, so that in particular Goodstein's original theorem and its independence from Peano arithmetic are obtained via our methods.
\medskip

\paragraph{\bf Layout.} In Section \ref{secClassical} we review Goodstein's classical result and set up an abstract framework which sets the stage for generalizations.
Section \ref{secOptimal} then introduces the notions of norm minimality and base-change maximality, which will be a focus of the paper.
With these notions in mind, the following sections study various Goodstein processes: Section \ref{secWeak} considers a weakened Goodstein principle,
Section \ref{secExponential} studies the optimality of hereditary exponential normal forms, and Section \ref{secElementary} shows that base-change maximality holds even if we extend the notation system to include multiplication.
Section \ref{secLowerBound} then compares the termination time of Goodstein processes to Hardy functions, from which termination and independence is obtained.
The optimality results obtained are used in Section \ref{secWalks} to provide generalizations of Goodstein's theorem in terms of {\em Goodstein walks,} and Section~\ref{secPhase} parametrizes Goodstein's result to obtain provability phase transitions for each ${\sf I}\Sigma_n$.

\section{The Classical Goodstein Process}\label{secClassical}

Let us discuss the original Goodstein principle from an abstract perspective, which will be useful in the rest of the text.
A {\em notation system} is a family of function symbols $\mathcal F$ so that each $f\in \mathcal F$ is equipped with an arity $n_f>0$ and a function $\val f \colon \mathbb N^{n_f} \to \mathbb N$.
For a function symbol $f(x_0,\ldots,x_{n})$ of arity $n+1$, the parameter $x_0$ will be regarded as a `base' and usually denoted $k$ or $\ell$.

Given fixed $k\geq 2$, the set of {\em (closed) base $k$ terms,} $\mathbb T^\mathcal F_k$, is defined inductively so that if $\tau_1,\ldots,\tau_n \in \mathbb T^\mathcal F_k$ are terms and $f$ is a function symbol with arity $n+1$, then $f(k,\tau_1,\ldots,\tau_n ) \in \mathbb T^\mathcal F_k$.
We write $\mathbb T^\mathcal F$ for $\bigcup_{k=2}^\infty \mathbb T^\mathcal F_k$; note that $\mathbb T^\mathcal F$ contains terms of all bases, but each term has a unique base.\footnote{Goodstein processes with mixed bases could also be of interest and may be considered in future work.}
The {\em value} of a term $\tau = f(k,\tau_1,\ldots,\tau_n)$ is defined recursively by $\val \tau = \val f (k,\val{\tau_1},\ldots,\val{\tau_n})$.
The {\em norm} of $\tau$ is defined inductively by $\norm \tau = 1 + \sum_{i = 1}^n\norm{\tau_i}$; note that function symbols that depend only on $k$ have norm one.
In practice, we may also include constant symbols $c$, but for theoretical purposes these will be regarded as function symbols $f$ of arity one such that $|f|(k)\equiv c$.
Similarly, operations such as addition might not display $k$ but a term $\tau +\sigma$ is `officially' $f(k,\tau,\sigma)$ for some symbol $f$ with $|f|(k,x,y)=x+y$. 


It is important to make a conceptual distinction between function symbols and the functions they represent, as for example we can do induction on the complexity of a term, independently of its numerical value.
However, often we will not make a {\em notational} distinction and omit $|\cdot|$; whether an expression should be treated as a value or a term will be made clear from context.
For the classical Goodstein process, we will work with the functions/function symbols $0,x+y$ and $k^x$; we denote this notation system by $\mathcal E$ for `exponential', and write $\mathbb E_k$ instead of $\mathbb T^\mathcal E_k$ and $\mathbb E$ instead of $\mathbb T^\mathcal E$.

It is not required that each natural number have a single notation, but a canonical one may be chosen nonetheless.
A {\em normal form assignment} for a notation system $\mathcal F$ is a function $\nf \cdot \cdot \colon [2,\infty)\times \mathbb N \to \mathbb T^\mathcal F $ such that $\nf k n\in\mathbb T^\mathcal F_k $ and $\val{\nf k n} = n$ for all $k\geq 2$ and $n$.
A notation system equipped with a normal form assignment is called a {\em normalized notation system.}
In the case of $\mathbb E_k$, the normal form for $n\in\mathbb N$ is defined as follows.
Set $\nf k 0 = 0$.
For $n>0$, assume that $\nf k m$ is defined for all $m<n$.
Let $r$ be the unique natural number such that $k^r\leq n < k^{r+1}$, and $ b = n - k^r$.
Then set $\nf k n  = k^{\nf k  r } + \nf k  b  $.

Finally, we need a {\em base change} operation to define the Goodstein process.
Given $k\leq \ell$ and $\tau \in \mathbb T^\mathcal F_k$, we define $\bch\tau{^\ell}  $ recursively by
\[\bch{f(k,\tau_1,\ldots,\tau_n)}{^\ell} = f(\ell, \bch {\tau_1} {^\ell},\ldots, \bch {\tau_n} {^\ell}).\]

If a normal form assignment is given, we can extend operations on terms to natural numbers by first computing their normal form.
In particular, we define $\norm n_k = \norm{\nf  k n }$ and $\bch n{\bases k\ell} = |\bch {\nf k n}{^\ell}|$. To ease notation, we will sometimes write $\bch n{}$ instead of $\bch n {\bases{k}\ell}$, when $k$ and $\ell$ are made clear.
We may also write $\nf{k}\tau$ instead of $\nf k{\val \tau}$.

\begin{definition}\label{defGoodstein}
Let $\mathcal F$ be a normalized notation system and $m\in \mathbb N$.
We define the {\em $\mathcal F$-Goodstein sequence beginning at $m$ with initial base $r$} to be the unique sequence $( m_i )_{i <\alpha}$, where $\alpha \leq \infty$, so that
\begin{enumerate}

\item $m_0=m$,

\item $m_{i+1}  = \bch{m_i}{^{r+i+1}_{r+i}} - 1$ if $m_i > 0 $,

\item $\alpha = i+1 $ if $m_i = 0$; if there is no such $i$, then $\alpha = \infty$.

\end{enumerate}
We write $ \goodp Fmir:=m_i$, and we often write $\good Fmi$ instead of $\goodp Fmi2$.
\end{definition}

With this, we can state Goodstein's principle within our general framework.

\begin{theorem}[Goodstein]\label{theoGood}
For every $m\in \mathbb N$, there is $i\in\mathbb N$ such that $\good Emi = 0$.
\end{theorem}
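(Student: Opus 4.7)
The plan is to associate to each positive term $\tau\in\mathbb E_k$ an ordinal below $\varepsilon_0$, show that the ordinal is invariant under the base-change operation when applied to a normal form, and show that the subtraction step strictly decreases the ordinal. Well-foundedness of $\varepsilon_0$ then forces the Goodstein sequence to terminate.

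First I would define $o_k\colon \mathbb E_k\to\varepsilon_0$ by recursion: $o_k(0)=0$, $o_k(\sigma+\tau)=o_k(\sigma)+o_k(\tau)$ (ordinary ordinal sum, read left-to-right so that the summands of a normal form appear in weakly decreasing order), and $o_k(k^\tau)=\omega^{o_k(\tau)}$. Using the definition of $\nf k n$, one checks by induction on $n$ that $o_k(\nf k n)$ is the Cantor normal form of some ordinal, and moreover that the value of $o_k(\nf k n)$ is the same ordinal for every $k\ge 2$; call this common value $O(n)$. The key numerical lemma is the strict monotonicity $n<n'\Rightarrow O(n)<O(n')$, proved by induction on $n'$ using the fact that the leading exponent $r$ in the normal form is maximal with $k^r\le n$.

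Next I would verify base-change invariance: for $k\le\ell$, $o_\ell(\bch{\nf k n}{\ell})=o_k(\nf k n)=O(n)$. This follows immediately from the syntactic definition of $\bch{\cdot}{\ell}$ and of $o$, provided one checks that $\bch{\nf k n}{\ell}$ is itself the normal form $\nf \ell {\val{\bch{\nf k n}{\ell}}}$; this last point is where one uses that the recursive normal-form condition $k^r\le n<k^{r+1}$ transports to $\ell^r\le n'<\ell^{r+1}$ when $n'$ is the base-$\ell$ value of the base-changed term, since the exponents have themselves been base-changed in a structurally identical way.

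Combining these two facts: if ${\rm G}^{\mathcal E}_i m>0$, then
\[ O({\rm G}^{\mathcal E}_{i+1}m)=O\bigl(\val{\bch{{\rm G}^{\mathcal E}_i m}{i+3}}-1\bigr)<O\bigl(\val{\bch{{\rm G}^{\mathcal E}_i m}{i+3}}\bigr)=O({\rm G}^{\mathcal E}_i m), \]
where the strict inequality uses monotonicity of $O$ and the final equality is base-change invariance. Hence $(O({\rm G}^{\mathcal E}_i m))_i$ is a strictly decreasing sequence in the well-founded order $\varepsilon_0$, so it cannot be infinite, and there must be some $i$ with ${\rm G}^{\mathcal E}_i m=0$.

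The main obstacle is the base-change invariance step, which at first glance looks trivial but actually rests on the nontrivial fact that the hereditary base-$k$ normal form is "preserved" under replacing $k$ by $\ell$: one must show that no new normal-form opportunities arise after the substitution. A secondary subtlety is the monotonicity of $O$, whose proof requires a case analysis on whether subtracting one decrements only a low-order term or triggers a cascade of re-expansions of $k^r$ into $(k-1)\cdot k^{r-1}+\cdots$, the latter requiring addition-by-copies and a careful comparison of Cantor normal forms.
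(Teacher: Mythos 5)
Your overall plan---replace the base by $\omega$ in the hereditary normal form, prove monotonicity and base-change invariance, and finish by well-foundedness below $\varepsilon_0$---is the standard argument; the paper does not actually prove Theorem \ref{theoGood} (it only cites Goodstein and Kirby--Paris), but it carries out exactly this scheme, below $\omega^\omega$, for the multiplicative system in the proof of Theorem \ref{theoGoodmult}. However, there is one concretely false step in your write-up: the claim that $o_k(\nf{k}{n})$ ``is the same ordinal for every $k\ge 2$''. This is not true and cannot be checked by induction on $n$. For example, $\nf{2}{2}=2^{2^0}$ gives $o_2(\nf{2}{2})=\omega$, while $\nf{3}{2}=3^0+3^0$ gives $o_3(\nf{3}{2})=2$; in general any $n$ with $n<\ell$ receives a finite ordinal in base $\ell$, while the same $n\ge k$ receives an infinite one in base $k$. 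So the base-independent function $O$ around which your final chain of inequalities is phrased does not exist.

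The good news is that this claim is never needed, and the rest of your argument is sound once the invariant is indexed by the current base: set $o(i)=o_{i+2}(\nf{i+2}{{\rm G}^{\mathcal E}_i m})$. Then $o(i+1)<o(i)$ follows from (i) strict monotonicity of $n\mapsto o_\ell(\nf{\ell}{n})$ for the \emph{fixed} base $\ell=i+3$, applied to ${\rm G}^{\mathcal E}_{i+1}m<\val{\bch{\nf{i+2}{{\rm G}^{\mathcal E}_i m}}{i+3}}$, together with (ii) the point you correctly single out as the crux, namely that $\bch{\nf{i+2}{{\rm G}^{\mathcal E}_i m}}{i+3}$ is again the base-$(i+3)$ normal form of its value, so that its $o_{i+3}$-value equals $o(i)$ by the purely syntactic substitution identity $o_\ell(\bch\tau\ell)=o_k(\tau)$. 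This is precisely how the paper's proof of Theorem \ref{theoGoodmult} is organized (there $o(i):=\bch{m_i}{\bases{i+2}\omega}$, and no base-independent ordinal is ever asserted; cf.\ Lemma \ref{lemmMultBCH} and Lemma \ref{lemmMultPsi}). With $O$ replaced by these base-indexed ordinals, and your two supporting lemmas proved as outlined, the proof goes through.
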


This theorem is a consequence of Theorem~\ref{theoGrowthExp}, which we will state and prove later.
Note that in Definition~\ref{defGoodstein}, the normal forms used are essential when computing $\bch{m_i}{^{r+i+1}_{r+i}}$, and Goodstein sequences based on different normal forms may have wildly different behaviour.
However, as we will see in Section~\ref{secWalks}, Theorem~\ref{theoGood} is remarkably robust and holds true for {\em any} choice of normal forms.
The proof of this uses base-change maximality, a notion of `optimality' of normal forms, as we discuss next.

\section{Optimality Criteria for Normal Forms}\label{secOptimal}

For a given notation system $\mathcal F$, there may be many ways to assign normal forms to natural numbers.
The question thus arises: is there an `optimal' way to define normal forms?
The following two criteria could help answer this question.
We say that a normal form assignment  {\sc nf}  is:

\begin{itemize}
\item {\em norm minimizing} if whenever $k\geq 2$ and $\tau \in \mathbb T^\mathcal F_k$, it follows that $\norm \tau\geq \norm {\nf k \tau} $;

\item {\em base-change maximal} if whenever $k\geq 2$ and $\tau \in \mathbb T^\mathcal F_k$, it follows that $\lvert \bch \tau {^\ell}\rvert \leq \val{ \bch {\nf k\tau} {^\ell} } $ for all $\ell \geq k$.

\end{itemize}
The motivation for norm minimizing normal forms should be clear, as these provide the most succinct way to represent natural numbers.
Base-change maximality is perhaps a less obvious criterion, although the intuition is that we are using the fastest-growing functions available in order to represent numbers; from this perspective, one may expect that the two notions will often coincide (although not always).
Moreover, as we will see, base-change maximal normal forms are rather useful.
For one thing, under some mild assumptions, they satisfy a natural monotonicity property.

\begin{proposition}\label{propMaxToMon}
Let $\mathcal F $ be a normalized notation system which includes addition and a term $1$ which does not depend on $k$.
Suppose that $\mathcal F $ is base-change maximal.
Then, whenever $2\leq k<\ell$ and $m<n $, it follows that $\bch m{\bases k\ell} < \bch n{\bases k\ell}$.
\end{proposition}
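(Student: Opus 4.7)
\medskip\noindent\textbf{Proof plan.} The plan is to exploit base-change maximality by hand-crafting, for any $n>m$, a convenient base-$k$ term whose value is $n$, and whose base-changed value we can compute directly. The normal form of $n$ will then, by maximality, produce something at least as big, and in particular strictly greater than $\bch{m}{\bases{k}{\ell}}$.

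Concretely, I would first form the term
\[
\sigma \;=\; \nf{k}{m} + \underbrace{1+1+\cdots+1}_{n-m \text{ copies}} \;\in\; \mathbb T^{\mathcal F}_k,
\]
which is legitimate because $\mathcal F$ contains addition and the constant $1$; its value is $m+(n-m)=n$. Next, since $1$ does not depend on the base and addition is likewise base-independent, the base change operation commutes with the construction of $\sigma$: unfolding the recursive definition of $\bch{\cdot}{\ell}$,
\[
\bch{\sigma}{\ell} \;=\; \bch{\nf{k}{m}}{\ell} + 1 + 1 + \cdots + 1,
\]
and hence $\val{\bch{\sigma}{\ell}} = \bch{m}{\bases{k}{\ell}} + (n-m)$.

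Now I apply base-change maximality to $\sigma$. Since $\val{\sigma}=n$, the normal form of $\sigma$ is $\nf{k}{n}$, and maximality gives
\[
\val{\bch{\sigma}{\ell}} \;\leq\; \val{\bch{\nf{k}{n}}{\ell}} \;=\; \bch{n}{\bases{k}{\ell}}.
\]
Combining the two displayed equations and using $n-m\geq 1$ yields
\[
\bch{m}{\bases{k}{\ell}} \;<\; \bch{m}{\bases{k}{\ell}} + (n-m) \;=\; \val{\bch{\sigma}{\ell}} \;\leq\; \bch{n}{\bases{k}{\ell}},
\]
as required. Note that the condition $k<\ell$ is not actually used; the argument works for every $\ell \geq k$, which is the range in which base-change maximality is posited.

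The argument is short and the only place that needs care is the claim that base change commutes with addition and leaves $1$ fixed. This rests squarely on the hypothesis that both symbols are base-independent, and an inattentive reading of the definition of $\bch{\cdot}{\ell}$ might cause confusion if one forgets that this operation merely rewrites the base parameter $k$ into $\ell$ inside each function symbol; for addition and $1$ this rewriting has no effect on the value. Thus the main (very mild) obstacle is purely notational bookkeeping around $\bch{\cdot}{\ell}$ and the normal-form convention $\nf{k}{\sigma}=\nf{k}{\val{\sigma}}$.
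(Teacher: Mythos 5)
Your proposal is correct and follows essentially the same route as the paper: build a base-$k$ term of value $n$ by appending copies of the base-independent term $1$ to $\nf{k}{m}$, then apply base-change maximality to compare with $\nf{k}{n}$. The only (cosmetic) difference is that the paper first reduces to $n=m+1$ by induction and adds a single $1$, whereas you add $n-m$ ones in one step.
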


\proof
Working inductively, we may assume that $n=m+1$.
Then, we have that $n=\val {\nf km+1}$, and by base-change maximality,
\[ \bch m{\bases k\ell} < \bch {\nf km}{^\ell}+1 = \bch {(\nf km + 1)}{^\ell} \leq \bch {\nf kn}{^\ell} = \bch n{\bases k\ell}.  \]
\endproof

In fact, this monotonicity property is crucial for proving that Goodstein processes terminate, and Proposition \ref{propMaxToMon} tells us that we have this property for free, given base-change maximality.

\begin{remark}\label{remMaxToMon}
Note that Proposition \ref{propMaxToMon} can also be applied `locally': if we know that $\mathcal F $ is base-change maximal whenever $\val \tau <N$ for some fixed value of $N$, then from $m<n<N$ we can deduce that $\bch m{\bases k\ell} < \bch n{\bases k\ell}$.
This restricted version will be useful in inductive arguments.
\end{remark}

In the sequel, we will evaluate various Goodstein-like processes according to these criteria.
We begin by considering a weak variant of Goodstein's original result.

\section{A Weak Goodstein Principle}\label{secWeak}

In this section we apply our framework to a weak Goodstein principle based on the `multiplicative' notation system $\mathcal M$, whose functions are $0,1,+$ and $kx$ (which we may also denote $k\cdot x$).
We will write $\mathbb M_k$ instead of $\mathbb T^\mathcal M_k$.
For ease of notation, we will omit parentheses around addition and treat terms $(\sigma+\tau)+\rho$ and $\tau+(\sigma+\rho)$ as identical; this will not be an issue, as all of the properties we consider are invariant under associativity.
For $q\in \mathbb N$, we define a term $\bar q$ by setting $\bar 0 = 0$ and, for $q>0$, $\bar q= 1+ 1 + \cdots +1$ ($q$ times); note that $\norm{\bar q} = 2q-1$ in this case.
For $k\geq 2$, define $m\gknf k\cdot p + q$ if $p,q$ are the unique positive integers such that $q<k$ and $m = k\cdot p + q$.
If $p=0$, set $\nf km =  \bar q$, and if $p>0$, define inductively $\nf k m = k \cdot \nf k p + \bar q$.
Note that $\norm{m }_k = \norm{p }_k + 2 q+1$.
Throughout this section, all notation (e.g.~$\nf km$, $\norm \tau$, etc.) will refer exclusively to this representation of natural numbers.

\begin{lemma}\label{lemmMultBCH}
If $m\in \mathbb N$ and $\ell > k\geq 2$, then
\[\nf{\ell }{\bch m {\bases k\ell}} = \bch {\nf k m}{^\ell}.\]
\end{lemma}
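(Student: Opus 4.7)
The plan is to induct on $m$, unfolding both sides via the recursive definitions of $\nf k\cdot$ and of base change.

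The base case $m=0$ is immediate: $\nf k 0 = 0$, and $0$ contains no occurrence of $k$, so $\bch{\nf k 0}{\ell} = 0 = \nf \ell 0$. For the inductive step, fix $m>0$ and write $m = k\cdot p + q$ with $0\leq q < k$. Here $p<m$ (since $k\geq 2$), so the inductive hypothesis applies to $p$. Note also that $\bar q$ is built only from $0$ and $1$, neither of which depends on the base, so $\bch{\bar q}{\ell} = \bar q$ and $\val{\bar q}=q$ in every base.

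Split into two cases depending on $p$. If $p=0$, then $m=q<k<\ell$, and $\nf k m = \bar q$, so $\bch{\nf k m}{\ell} = \bar q$; computing $\nf \ell q$ using the algorithm with quotient $0$ and remainder $q<\ell$ gives $\nf \ell q = \bar q$, as required. If $p>0$, the normal form is $\nf k m = k\cdot \nf k p + \bar q$, and hence
\[\bch{\nf k m}{\ell} = \ell\cdot \bch{\nf k p}{\ell} + \bar q.\]
By the inductive hypothesis, $\bch{\nf k p}{\ell}=\nf\ell{p'}$ where $p':=\val{\bch{\nf k p}{\ell}}$. The value of the full base-changed term is $\ell\cdot p'+q$, and since $q<k\leq \ell$, this expression already exhibits the unique quotient-remainder decomposition of $\ell p' + q$ by $\ell$. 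Provided $p'>0$, the defining recursion for $\nf\ell\cdot$ then yields
\[\nf\ell{\ell p'+q} = \ell\cdot \nf\ell{p'} + \bar q = \bch{\nf k m}{\ell},\]
which is exactly what we want.

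The only subtlety, and what I expect to be the main obstacle to write carefully, is ruling out $p'=0$ so that we land in the correct branch of the normal form recursion. For this, one observes that the base change operation is weakly monotone in the base: replacing each occurrence of the constant $k$ in a term by $\ell\geq k$ can only increase (or preserve) the value of every subterm, since the only base-dependent symbol is $k\cdot x$ and $\ell\cdot x\geq k\cdot x$. Therefore $p'=\val{\bch{\nf k p}{\ell}}\geq \val{\nf k p}=p\geq 1$, as needed. Once this monotonicity observation is in hand, the induction closes cleanly.
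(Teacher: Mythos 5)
Your proof is correct and takes essentially the same approach as the paper: the paper's entire argument is the observation that $q<k$ yields $q<\ell$, so that the base-$k$ quotient--remainder structure of $\nf km$ is preserved as a valid base-$\ell$ normal form after the base change. Your induction on $m$, including the monotonicity remark guaranteeing $p'=\val{\bch{\nf kp}{\ell}}>0$ when $p>0$, simply spells out the details the paper leaves implicit.
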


\proof
This is clear since $q<k$ yields $q<\ell$.
\endproof



This normalized notation system satisfies both optimality properties, as we see next.

\subsection{Norm Minimality}

Let us begin by showing that our multiplicative notation system satisfies the norm minimality property.

\begin{theorem}
If $\tau \in \mathbb M_k$, then $\norm {\nf k \tau} \leq \norm \tau$.
\end{theorem}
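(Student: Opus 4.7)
The plan is to proceed by induction on the structure of $\tau\in\mathbb M_k$. The base cases $\tau=0$ and $\tau=1$ are immediate, since each such term coincides with its own normal form.

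For the step case $\tau=k\cdot\sigma$, set $n=\val\sigma$. The outer inductive hypothesis gives $\norm{\nf k\sigma}\le\norm\sigma$, so it suffices to show that $\norm{\nf k{kn}}\le 1+\norm{\nf k n}$. Dividing $kn$ by $k$ yields quotient $n$ and remainder $0$, so the normal form of $kn$ is essentially $k\cdot\nf k n$ (possibly with a trailing $+\bar 0$ under the literal reading of the definition, which only makes the target bound easier), and the inequality follows by direct inspection.

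For the step case $\tau=\sigma+\rho$, set $m=\val\sigma$ and $n=\val\rho$. The outer hypothesis reduces the goal to the key inequality
\[
\norm{\nf k{m+n}}\;\le\;1+\norm{\nf k m}+\norm{\nf k n},
\]
which I would establish by a separate strong induction on $m+n$. Writing $m=kp+q$ and $n=kr+s$ with $q,s<k$, there are two cases. If $q+s<k$, then $m+n=k(p+r)+(q+s)$, and a single application of the inner IH to the pair $p,r$, combined with the length formula $\norm\mu_k=\norm\pi_k+2\varrho+1$ for $\mu=k\pi+\varrho$ in the generic range $\pi,\varrho>0$, yields the desired bound with a slack of $1$. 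If $q+s=k+t$ with $0\le t<k$, then $m+n=k(p+r+1)+t$, and one applies the inner IH twice: first to the pair $p,r$ to bound $\norm{\nf k{p+r}}$ in terms of $\norm{\nf k p}+\norm{\nf k r}$, and then to the pair $p+r,1$ to absorb the increment. The extra $+2$ accumulated on the left is comfortably offset by the $2k$ drop in the units-digit contribution on the right, so the inequality closes whenever $k\ge 2$.

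The main obstacle is the carry case of the inner induction, where one must keep careful track of how the normal form grows as the carry propagates up, and verify that the saving from replacing the units digit $q+s$ by $t=q+s-k$ dominates the cost of incrementing the quotient from $p+r$ to $p+r+1$. Boundary situations in which one or more of $p,q,r,s$ vanishes cause the generic length formula to degenerate, and must be checked by hand; in each such case, however, the normal form is strictly smaller than in the generic case, so the inequality only becomes slacker, and the overall induction goes through.
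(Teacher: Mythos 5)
Your proof is correct, but it is organized differently from the paper's. The paper argues by induction on the \emph{value} $\val\tau$ with a case analysis on the shape of $\tau$: sums of normal forms are syntactically merged into a single term $k(\sigma_0+\sigma_1)+\overline{n_0+n_1}$ of no larger norm, and the case $\tau=k\cdot\sigma+\bar n$ with $n\geq k$ absorbs the carry by rewriting $m \gknf k\cdot\val{\sigma+\bar p}+q$ and invoking the value induction on $\sigma+\bar p$; no separate numerical lemma is stated. You instead do a structural induction on $\tau$ and isolate the purely numerical subadditivity inequality $\norm{\nf k{m+n}}\leq 1+\norm{\nf k m}+\norm{\nf k n}$, proved by induction on $m+n$ with an explicit digit/carry analysis. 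This is a genuinely different decomposition and arguably more modular: the lemma is reusable and the carry bookkeeping ($2k\geq 4$ of slack against a cost of at most $2$) is transparent, whereas the paper's route avoids ever leaving the term level. Two small caveats: your parenthetical that a literal trailing $+\bar 0$ ``only makes the target bound easier'' is backwards --- under that literal reading $\norm{\nf k{kn}}$ would be $\norm{\nf k n}+3$ and even the theorem itself would fail (e.g.\ for $\tau=k\cdot 1$), so one must adopt the convention, implicit in the paper's identity $\norm m_k=\norm p_k+2q+1$, that zero components are omitted; and your blanket claim that degenerate cases ($p,q,r,s=0$) only make the inequality slacker is inaccurate when the degeneracy occurs in $\norm{\nf k m}$ or $\norm{\nf k n}$ on the right-hand side (e.g.\ $p=r=0$ with $q+s<k$ gives equality, not slack), although a direct check shows the inequality does hold in every such case. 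Neither issue affects the soundness of the overall argument.
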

\shortproof{
\proof
Write $m = \val \tau$.
The claim is proven by induction on $m$, considering several cases.
We treat the most interesting case, which is that of a term 
$\tau = k\cdot \sigma + \bar n$.
Write $n = k\cdot p + q$ with $q<k$, so that $m\gknf k\cdot \val { \sigma   + \bar p} + q$.
By the induction hypothesis, $\norm {\val{ \sigma + \bar p}}_k \leq \norm { \sigma + \bar p} =\norm   \sigma + 2 p  $.
Hence,
\begin{align*}
\norm m_k &\leq \norm {  \val{ \sigma+p}}_k + 2 q +1  = \norm   \sigma + 2 (p + q) + 1 \\
& \leq \norm \sigma + 2 n + 1   = \norm {k\cdot \sigma + \bar n}.
\end{align*}
\endproof
}

\longproof{
\proof
Write $m = \val \tau$ and proceed by induction on $m$, considering several cases.
\medskip

\begin{enumerate}[label*={\sc Case \arabic*},wide, labelwidth=!, labelindent=0pt]

\item ($\tau = 0$). Then, $m = 0 $ and $\norm m_k = \norm \tau = 1$.
\medskip

\item ($\tau = k\sigma$). If $\val \sigma = 0$ then $\nf k\tau = 0$ and $\norm 0 <\norm \tau$. Otherwise, by the induction hypothesis, $\norm{\tau} \geq \norm {k \cdot \nf k\sigma}$, and clearly $k\cdot \nf k\sigma = \nf k\tau$.
\medskip

\item ($\tau = k\cdot \sigma + \bar n$).
We may assume that $n>0$, otherwise $m = \val{k\cdot \sigma}$ with smaller norm, and we can apply the previous case.
Write $n = k\cdot p + q$ with $q<k$, so that $m\gknf k\cdot \val { \sigma   + \bar p} + q$.
By the induction hypothesis, $\norm {\val{ \sigma + \bar p}}_k \leq \norm { \sigma + \bar p} =\norm   \sigma + 2 p  $.
Hence,
\begin{align*}
\norm m_k &\leq \norm {  \val{ \sigma+p}}_k + 2 q +1  \leq \norm   \sigma + 2 (p + q) + 1 \\
& \leq \norm \sigma + 2 n + 1   = \norm {k\cdot \sigma + \bar n}.
\end{align*}

\item ($\tau = \bar m $). If $ m < k$, then $ \tau$ is already in normal form.
Otherwise, we note that $m = \val{ k\cdot 1 + \overline{m-k} }$ (omitting $\overline{m-k}$ if $m=k$) and, moreover, $\norm{k\cdot 1 + \overline{m-k}} = 2 + 2(m-k) \leq 2 + 2(m-2) < 2m-1 $.
Thus we obtain $\norm \tau > \norm {k\cdot 1 + \overline{m-k}}$, and can apply one of the previous cases.
\medskip

\item ($\tau = \tau_0 + \tau_1$, but not one of the above).
By the induction hypothesis, we have that $\norm \tau \leq \norm {( \nf k{  \tau_0} + \nf k{  \tau_1})} $, and thus we may assume that $\tau_0$ and $\tau_1$ are in normal form.
Write $ {\tau_0}=k\sigma_0 + \bar  n_0 $ and $ {\tau_1} = k\sigma_1 + \bar n_1 $, where any of the displayed terms are omitted if their value is zero.
Then, $\val \tau = \val {k(\sigma_0 + \sigma_1) + \overline {(n_0 + n_1)}} $ (also omitting null terms), and it is easy to check that $ \norm{\tau} \geq \norm {k(\sigma_0 + \sigma_1) + \overline {(n_0 + n_1)}}$.
We can then apply the appropriate one of the previous cases.
\end{enumerate}
\endproof
}

\subsection{Maximality of Base Change}

Recall that our second optimality criterion was optimality under base change.
We will show that multiplicative normal forms also enjoy this property.
This will follow from the next lemma.

\begin{lemma}\label{lemmMultMax}
If $m = k\cdot r+s$ and $\ell \geq k$, then
\[
\bch {m}{\bases k\ell}\geq \ell \cdot \bch {r}{\bases k\ell}+s,\]
and equality holds if and only if $m \gknf k\cdot r+s$.
\end{lemma}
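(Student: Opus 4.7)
The plan is to prove both parts of the lemma by directly computing $\bch{m}{\bases k \ell}$ from the canonical normal form of $m$ and comparing with the right-hand side. Let $m \gknf k \cdot r_0 + s_0$ be the canonical decomposition, so $s_0 < k$; by the definition of $\nf k m$ and of base change, we immediately have
\[\bch{m}{\bases k \ell} = \ell \cdot \bch{r_0}{\bases k \ell} + s_0.\]
Given an arbitrary decomposition $m = kr + s$, I would write $s = kq + s_0$ with $0 \leq s_0 < k$, so that $r_0 = r + q$. Substituting, the target inequality reduces to
\[\ell \cdot \bigl( \bch{r+q}{\bases k \ell} - \bch{r}{\bases k \ell} \bigr) \geq kq.\]

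The heart of the argument is then to show that $\bch{r+q}{\bases k \ell} \geq \bch{r}{\bases k \ell} + q$; combined with $\ell \geq k$, this closes the estimate. This amounts to strict monotonicity of the base-change map on natural numbers (together with the observation that an integer-valued strictly increasing function has step size at least one). I would establish this monotonicity as a short preliminary sub-lemma, by induction on $n$: if $n+1 \gknf k \cdot p + q'$ with $q' > 0$, the canonical decompositions of $n$ and $n+1$ differ only in the constant summand, and the two base-changed values differ by exactly one; if $q' = 0$, a carry-over occurs and one appeals to the inductive hypothesis at $p < n+1$ together with $\ell \geq k$. Alternatively, one can fold this monotonicity into a strong induction on $m$ within the main lemma and invoke Remark \ref{remMaxToMon} locally, which is slightly more economical but more subtle given the circularity with Proposition \ref{propMaxToMon}.

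For the equality claim, if $s < k$ then the given decomposition is already the canonical one, $q = 0$, and equality is immediate from the displayed identity above. Conversely, when $q \geq 1$ and $\ell > k$ one has $\ell q > kq$, yielding strict inequality. The only genuine obstacle is the monotonicity step; once it is in hand, the remainder is bookkeeping with the canonical decomposition and the estimate $\ell \geq k$.
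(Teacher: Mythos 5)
Your argument is correct, and it differs from the paper's mainly in how the key estimate is supplied. The paper proves the lemma by induction on $m$: it writes $s \nnf k\cdot p+q$, so that $m \nnf k\cdot(r+p)+q$, writes $r \nnf k\cdot u+v$, and applies the induction hypothesis (the lemma itself, to $r+p$ with the decomposition $ku+(v+p)$) to obtain exactly your expansivity estimate $\bch{(r+p)}{\bases k\ell} \geq \bch{r}{\bases k\ell}+p$; the conclusion then follows from $\ell p+q \geq kp+q=s$, with strictness unless $p=0$. You instead isolate that estimate as a standalone sub-lemma (strict monotonicity, hence step size at least one, of $n\mapsto \bch{n}{\bases k\ell}$, proved by its own induction on $n$ with the carry case handled via $\ell\geq k$), after which your main argument is a non-inductive computation with the canonical decomposition $s=kq+s_0$. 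This costs an extra lemma that the paper avoids by self-application of the induction hypothesis, but it makes the main step transparent; and your decision to prove monotonicity directly rather than lean on Remark \ref{remMaxToMon} is the right one, since base-change maximality for $\mathcal M$ is only established afterwards in Theorem \ref{theoMultMax}, so the alternative route you mention would indeed be circular unless phrased in the ``below $m$'' local form. One small point: your converse (strictness) direction assumes $\ell>k$; for $\ell=k$ the base change is the identity and equality always holds, so the ``only if'' claim genuinely needs $\ell>k$. The paper's proof has the same implicit assumption in ``the last inequality is strict unless $p=0$'', and the lemma is only invoked with $\ell>k$ (Theorem \ref{theoMultMax}), so this is an imprecision of the statement rather than a gap in your proof.
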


\proof
In this proof, we write $\bch x{} $ instead of $\bch x{\bases k\ell}$.
Proceed by induction on $m$.
If $m=0$, then $r=s=0$ and $\bch 0 { }= 0 = \ell \cdot \bch {0}{ }+0$, so we assume $m>0$.
Write $s\nnf k\cdot p + q$ (with $p$ or $q$ possibly zero), so that $m\nnf k\cdot(r+p) +q$.
Write $r = ku+v$ in normal form.
Then, the induction hypothesis yields
\[ \bcn {(r+p)} {} = \bch {\big (ku+( v+p) \big )} {} \stackrel{\text{\sc ih}}\geq \ell \cdot \bcn{u} {^\ell} + v + p = \bcn r{^\ell} + p. \]
Hence,
\begin{align*}
\bch m { } & = \ell  \cdot \bch {(r+p)}{} +q
 \geq \ell \cdot \bch {r} {} + \ell  p+q \geq \ell  \cdot \bch {r} {} +s,
\end{align*}
and the last inequality is strict unless $p = 0$, so that $m \gknf kr+s$.
\endproof

In view of Lemma~\ref{lemmMultMax}, a simple induction on term complexity yields the following.

 \begin{theorem}\label{theoMultMax}
If $\tau \in \mathbb M_k$, $\ell > k \geq 2$, and $m=\val \tau$, then
\[\bch { m}{\bases k\ell } \geq \val{\bch \tau{^\ell}}.\] 
 \end{theorem}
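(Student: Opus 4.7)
I would proceed by structural induction on $\tau$, writing $m = \val\tau$ throughout. The constant base cases $\tau = 0$ and $\tau = 1$ are immediate, since $0$ and $1$ are already in normal form for every $k \geq 2$, giving $\bch{m}{\bases k\ell} = m = \val{\bch{\tau}{\ell}}$.

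For $\tau = k\sigma$, Lemma~\ref{lemmMultMax} applied with $r = \val\sigma$ and $s = 0$ gives $\bch{k\val\sigma}{\bases k\ell} \geq \ell \cdot \bch{\val\sigma}{\bases k\ell}$, and combining with the inductive hypothesis $\bch{\val\sigma}{\bases k\ell} \geq \val{\bch{\sigma}{\ell}}$ yields the desired bound
\[
\bch{m}{\bases k\ell} \;\geq\; \ell \cdot \val{\bch{\sigma}{\ell}} \;=\; \val{\bch{k\sigma}{\ell}}.
\]

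The genuinely interesting case is $\tau = \sigma_0 + \sigma_1$. After applying the inductive hypothesis to each summand, the theorem reduces to the following superadditivity property of base change, which I would isolate as a sub-lemma:
\[
\bch{(m_0 + m_1)}{\bases k\ell} \;\geq\; \bch{m_0}{\bases k\ell} + \bch{m_1}{\bases k\ell} \qquad \text{for all } m_0, m_1 \in \mathbb N.
\]
My plan is to prove this by a secondary induction on $m_0 + m_1$. Writing normal forms $m_i = k \cdot a_i + b_i$ with $b_i < k$, one has $m_0 + m_1 = k \cdot (a_0 + a_1) + (b_0 + b_1)$; this need not itself be in normal form, since $b_0 + b_1$ may reach or exceed $k$, but Lemma~\ref{lemmMultMax} still yields
\[
\bch{(m_0 + m_1)}{\bases k\ell} \;\geq\; \ell \cdot \bch{(a_0 + a_1)}{\bases k\ell} + b_0 + b_1.
\]
Since $k \geq 2$, whenever $m_0 + m_1 > 0$ we have $a_0 + a_1 < m_0 + m_1$, so the secondary induction hypothesis delivers $\bch{(a_0+a_1)}{\bases k\ell} \geq \bch{a_0}{\bases k\ell} + \bch{a_1}{\bases k\ell}$. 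Substituting, and invoking the equality clause of Lemma~\ref{lemmMultMax} to rewrite $\ell \cdot \bch{a_i}{\bases k\ell} + b_i$ as $\bch{m_i}{\bases k\ell}$, then closes the sub-induction.

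The main obstacle I anticipate lies precisely in spotting this superadditivity detour: a first-pass structural induction stalls on $\sigma_0 + \sigma_1$ because Lemma~\ref{lemmMultMax} only controls a single $k$-ary ``digit'' at a time, whereas in the sum case two independent normal-form expansions need to be combined and a potential carry from $b_0 + b_1$ absorbed. Once superadditivity is formulated, however, Lemma~\ref{lemmMultMax} does essentially all the work, and the descent on $m_0 + m_1$ keeps the bookkeeping routine.
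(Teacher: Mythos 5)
Your proof is correct: the descent in the sub-lemma is sound (if $m_i\gknf k\cdot a_i+b_i$ with $k\geq 2$ and $m_0+m_1>0$, then indeed $a_0+a_1<m_0+m_1$), the application of Lemma~\ref{lemmMultMax} to the possibly non-normal decomposition $m_0+m_1=k(a_0+a_1)+(b_0+b_1)$ is legitimate since that lemma only requires $m=k\cdot r+s$, and its equality clause does give $\bch{m_i}{\bases k\ell}=\ell\cdot\bch{a_i}{\bases k\ell}+b_i$. The route differs from the paper's in how the sum case is organized. The paper runs a single induction on $\norm\tau$: in the case $\tau=\tau_0+\tau_1$ it replaces each summand by its normal form $k\cdot\sigma_i+\bar q_i$, merges the two into the single term $k(\sigma_0+\sigma_1)+\overline{q_0+q_1}$ (which has the same base-changed value), and then re-enters the leading case, where Lemma~\ref{lemmMultMax} is applied once with the non-normal split $m=k\cdot\val{\sigma_0+\sigma_1}+(q_0+q_1)$; the carry $q_0+q_1\geq k$ is thus absorbed syntactically, at the cost of having to check that the merged term still falls under the induction. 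You instead keep the structural induction on $\tau$ clean and factor the carry-handling into a purely numerical statement, superadditivity of $n\mapsto\bch n{\bases k\ell}$, proved by a separate induction on $m_0+m_1$. Your version isolates a reusable fact about the base-change function on $\mathbb N$ and avoids the paper's slightly delicate bookkeeping about the norm of the merged term, while the paper's version stays entirely at the term level and gets by with one induction and one invocation pattern of Lemma~\ref{lemmMultMax}. Both arguments put all the real weight on Lemma~\ref{lemmMultMax}, so the difference is organizational rather than conceptual, but your decomposition is a perfectly good, arguably tidier, alternative.
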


\section{Optimality of Exponential Normal Forms}\label{secExponential}

Now we turn our attention to the original Goodstein process.
In this setting, it is already known that the process terminates \cite{Goodsteinb}, and that this fact is independent of Peano arithmetic \cite{Kirby}.
We will show that the notation system used satisfies our optimality criteria.
We begin by establishing some useful basic properties.

\subsection{Properties of Normal Forms}

Recall that $\mathcal E$ has as primitive functions $0$, $x+y$, and $ k^x$ (with $k\geq 2$), and is equipped with normal forms as defined in Section \ref{secClassical}. We will treat terms modulo associativity of addition and hence omit parentheses.
However, we will not treat term addition as commutative.
With this in mind, it is easy to check that $k^{\rho_0} + \ldots + k^{\rho_{n-1}}$ is in normal form if and only if each $\rho_i$ is in normal form, $\rho_i \geq \rho_{i+1}$ whenever $i + 1  < n $, and $\rho_i > \rho_{i+k-1}$ whenever $i+k-1 < n$.
We will extend the notation $\nnf$ to write $m\nnf \tau(k,a_1,\ldots,a_{n })$, where $a_i\in \mathbb N$, if $m\gknf \tau(k,\nf k{a_1},\ldots,\nf k {a_{n }})$; for example, we may write $15=_2 2^3+ 2^2 + 3$ or $12=_2 8 + 2^2$ but not, say, $15=_2 7 + 2^3$.
Sums should be read from right to left, i.e.~$\sum_{i=0}^{n } \tau_i = \tau_n + \ldots + \tau_0$.
Multiplication is used as a shorthand: $p\cdot \tau = \tau+\ldots + \tau$ ($p$ times).

With this notation at hand, the following is easily checked.

\begin{lemma}\label{lemmExpNFProp}
Fix $k\geq 2$, $m\in \mathbb N$ and $ \sigma,\tau \in \mathbb E_k$.
\begin{enumerate}

\item If $\sigma + \tau $ is in normal form, then $\sigma$ and $\tau$ are each in normal form.

\item\label{itExpNFPRopOne} If $m \nnf k^a$ and $b < a$, then $m-k^b \nnf \sum_{i=  b}^{a-1} (k-1) k ^{i}$.

\item\label{itExpNFPRopTwo} If $m = a + k^b$ and $n = k^c + d$ are in normal form with $b>c$, then $m+n$ is in normal form.

\end{enumerate}
\end{lemma}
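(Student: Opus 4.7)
The plan is to derive all three parts directly from the syntactic characterization stated in the paragraph preceding the lemma: $k^{\rho_0} + \ldots + k^{\rho_{n-1}}$ is in normal form iff each $\rho_i$ is in normal form, $\rho_i \geq \rho_{i+1}$ for $i+1 < n$, and $\rho_i > \rho_{i+k}$ for $i+k < n$. The key observation throughout is that these three conditions are all \emph{local}: they concern a single exponent, a pair at adjacent positions, or a pair at positions $k$ apart, so they behave well under taking consecutive blocks and under concatenation at a boundary whose exponents strictly decrease.

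For part (i), I would write $\sigma + \tau = k^{\rho_0} + \cdots + k^{\rho_{n-1}}$, so that $\sigma$ and $\tau$ correspond to two consecutive blocks of this list of exponents. Every condition in the characterization restricts to any contiguous sublist: normality of each $\rho_i$ is unchanged, and the inequalities $\rho_i \geq \rho_{i+1}$ and $\rho_i > \rho_{i+k}$ persist since all the pairs of indices involved remain pairs within the sublist. So $\sigma$ and $\tau$ are in normal form.

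For part (ii), the plan is to apply the geometric identity $k^a - k^b = (k-1)\sum_{i=b}^{a-1} k^i$, expanding the factor $(k-1)$ so that each exponent $k^i$ appears exactly $k-1$ times consecutively. Then I verify the three normal-form conditions for this sum: each exponent $i$ is a natural number which, by an outer induction on value, is itself in normal form; the exponents are weakly decreasing from $a-1$ down to $b$; and since each value $i$ is repeated only $k-1 < k$ times in a row, the $k$-apart strict inequality $\rho_j > \rho_{j+k}$ holds wherever $j+k$ is in range.

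For part (iii), I would display $m$ and $n$ with their normal-form expansions, writing $m$ as a sum of powers ending with $k^b$ and $n$ as a sum of powers beginning with $k^c$, and concatenate. Normality of each exponent and the two inequality conditions within each half are inherited from $m$ and $n$. The only new checks happen at the boundary between $k^b$ and $k^c$: weak decrease there is $b > c$, and for the $k$-apart condition any index $i$ in the $m$-block with $i+k$ in the $n$-block satisfies $\rho_i \geq b > c \geq \rho_{i+k}$, since $\rho_i$ is some exponent of $m$ that is at least $b$ and $\rho_{i+k}$ is some exponent of $n$ that is at most $c$. No other indices cross the boundary for the $k$-apart check, so the concatenation is in normal form.

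The main obstacle, such as it is, is purely notational — keeping straight the convention that the sum lists exponents in weakly decreasing order and correctly identifying which end of each sum is adjacent to which at the $m$--$n$ boundary. Once that is set up, each part reduces to a one-line verification against the characterization.
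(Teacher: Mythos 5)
Your proof is correct and is exactly the verification the paper has in mind: the lemma is stated as ``easily checked'' from the syntactic characterization of normal forms (exponents in normal form, weakly decreasing, with bounded repetition), and your three arguments---restriction to contiguous blocks, the geometric expansion of $k^a-k^b$, and concatenation across a strictly decreasing boundary---are precisely that check. No gaps worth noting beyond the degenerate cases (zero summands), which are harmless.
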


\subsection{Norm Minimality}

In this subsection, we will show that the hereditary exponential notation satisfies norm minimality.
We begin with some useful inequalities.

\begin{lemma}\label{lemmExpPlusOne}
If $k\geq 2$ and $m\in \mathbb N$, then $\norm{m+1}_k \leq   \norm m_k+3$.
\end{lemma}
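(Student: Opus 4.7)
The plan is to prove the lemma by strong induction on $m$, exploiting the recursive structure of the normal form. The base case $m = 0$ is immediate: $\norm{\nf k 1} = \norm{k^0} = 2 \leq \norm{\nf k 0} + 3$.

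For the inductive step with $m \geq 1$, I would write $m = k^r + b$ with $r$ maximal subject to $k^r \leq m$ and $0 \leq b < (k-1)k^r$, so that $\nf k m = k^{\nf k r} + \nf k b$ (omitting the second summand when $b = 0$). The argument then splits according to whether $m+1$ stays below $k^{r+1}$ or is forced to carry up to a new leading exponent.

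In the first case $m+1 < k^{r+1}$, the leading exponent is preserved and $\nf k{m+1} = k^{\nf k r} + \nf k{b+1}$. When $b > 0$, applying the inductive hypothesis to $b < m$ gives $\norm{\nf k{b+1}} \leq \norm{\nf k b} + 3$, and summing norms with the unchanged contribution from $k^{\nf k r}$ yields the bound. When $b = 0$, a direct computation suffices: the fresh summand $1 = k^0$ contributes exactly three extra units of norm (two for $\norm{k^0}$ and one for the new $+$), giving $\norm{\nf k{m+1}} = \norm{\nf k m} + 3$ on the nose. In the second case $m+1 = k^{r+1}$, we have $\nf k{m+1} = k^{\nf k{r+1}}$, so I invoke the inductive hypothesis on the exponent $r$ instead. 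This is permissible since $r < k^r \leq m$ whenever $r \geq 1$, and the only corner case $r = 0$ (which forces $k = 2$ and $m = 1$) is a direct check. It then follows that
\[
\norm{\nf k{m+1}} \;=\; 1 + \norm{\nf k{r+1}} \;\leq\; \norm{k^{\nf k r}} + 3 \;\leq\; \norm{\nf k m} + 3,
\]
where the last inequality holds because $k^{\nf k r}$ appears as a subterm of $\nf k m$.

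The main obstacle I anticipate is purely bookkeeping: keeping the two distinct invocations of the inductive hypothesis — to $b$ in the non-carry branch and to $r$ in the carry branch — cleanly separated, and verifying in each case that the argument is strictly smaller than $m$. Once this is in place the estimates reduce to routine term-by-term comparison of norms, and the constant $3$ reflects exactly the cost of a single extra $k^0$ together with one new $+$ sign.
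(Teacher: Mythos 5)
Your proof is correct, but it takes a different route from the paper's. You peel off the \emph{leading} term of $m$, writing $m = k^r + b$ with $r$ maximal, and split on whether adding $1$ carries into a new leading power: in the non-carry branch you apply the induction hypothesis to the tail $b$, in the carry branch to the exponent $r$. The paper instead works from the bottom: it writes $m+1 \nnf a + k^b$, isolating the smallest summand of $m+1$, handles $b=0$ directly, and for $b>0$ uses Lemma \ref{lemmExpNFProp}.\ref{itExpNFPRopOne} to read off the normal form of $m=(m+1)-k^0$ as $a + \sum_{i=0}^{b-1}(k-1)k^i$; a single application of the induction hypothesis, to the exponent $b-1$, then yields the bound. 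So the paper has only one recursion site but leans on the subtraction lemma (which it reuses anyway in Lemmas \ref{lemmExpNormMin} and \ref{lemmExpMaxBCH}), whereas your argument avoids the digit expansion of $k^b-1$ at the price of a two-branch case analysis; both are legitimate. Two small points to tidy: the paper's official normal form keeps the trailing summand $\nf k0 = 0$, so under its bookkeeping $\norm{1}_k = 4$ rather than $2$ — this only shifts the norms of positive numbers by a uniform constant, so your inequalities survive under either convention, but your base case and your ``exactly three extra units'' computation silently assume the trailing zero is dropped. Also, in the carry case your parenthetical is off: $r=0$ with $m+1=k^{r+1}$ occurs for every $k$ (namely $m=k-1$), not only for $k=2$, $m=1$; this is harmless, since in the inductive step $m\geq 1$ gives $r=0<m$, so the induction hypothesis applies there too and no separate check is actually needed.
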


\proof
Write $m+1 \nnf a+k^b$.
If $b = 0$, then $ m =  a$ and $\norm{m+k^0}_k = \norm m_k + 3$ (we add one for the term $0$, one for $+$ and one for $k^\cdot$).
Otherwise, using Lemma \ref{lemmExpNFProp}, we see that $m \nnf a + \sum_{i=0}^{b-1} (k-1) k^i$, and
\begin{align*}
\norm m_k + 3 & \geq \norm {a + k^{b-1}}_k + 3 = \norm a_k +\norm{b-1}_k +5 \\
& \geq^{\text{\sc ih}} \norm a + \norm{b} + 2 = \norm {m+1}_k.&
\end{align*}
\endproof

\shortproof{With this in mind, the following useful inequality is proven by induction on $m$; details are left to the reader.}

\begin{lemma}\label{lemmExpNormMin}
If $m= k^a + b$, then $\norm m_k \leq \norm a_k + \norm b_k + 2$.
\end{lemma}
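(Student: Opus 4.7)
My plan is to proceed by strong induction on the value $b$. The base case $b = 0$ is a direct computation: $\nf k m = k^{\nf k a}$, so $\norm m_k = 1 + \norm a_k$, which is comfortably below the bound $\norm a_k + \norm 0_k + 2 = \norm a_k + 3$.

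For the inductive step with $b > 0$, I would let $c$ satisfy $k^c \leq b < k^{c+1}$ and set $b' = b - k^c$, so that $\nf k b$ decomposes as $k^{\nf k c}$ followed by $\nf k{b'}$. I would then split into three cases according to the comparison between $a$ and $c$. The case $a > c$ is immediate: then $b < k^a$, the term $k^{\nf k a} + \nf k b$ is already in normal form, and its norm matches the bound with equality. The case $a < c$ is handled by rewriting $m = k^c + (k^a + b')$ and applying the induction hypothesis twice: first to the pair $(k^c,\, k^a + b')$, which is legitimate because $k^a + b' < k^c + b' = b$, and then to the pair $(k^a,\, b')$, legitimate because $b' < b$. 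Chaining the resulting inequalities and using $\norm b_k = 2 + \norm c_k + \norm{b'}_k$ gives exactly the claimed bound; the degenerate sub-case $b' = 0$ is handled by directly observing that $\nf k m = k^{\nf k c} + k^{\nf k a}$.

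The case $a = c$ is the most delicate, and I expect it to be the main obstacle. Here $\nf k b$ starts with $d$ consecutive copies of $k^a$ for some $1 \leq d \leq k - 1$, so that $b = d \cdot k^a + b'''$ with $b''' < k^a$ and $m = (d+1) k^a + b'''$. When $d + 1 \leq k - 1$, the normal form of $m$ is obtained from that of $b$ simply by prepending one more $k^a$, and a direct count of symbols gives equality in the bound. The real difficulty is the cascading sub-case $d + 1 = k$: then the carry propagates and $\nf k m = k^{\nf k(a+1)} + \nf k{b'''}$, so the jump from $\norm a_k$ to $\norm{a+1}_k$ must be absorbed. I would invoke Lemma \ref{lemmExpPlusOne} to obtain $\norm{a+1}_k \leq \norm a_k + 3$, and then verify by elementary arithmetic that the slack contributed by the $k - 1$ explicit copies of $k^a$ already accounted for in $\norm b_k$ more than compensates for this extra $+3$, as soon as $\norm a_k \geq 1$, which always holds.
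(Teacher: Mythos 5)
Your proof is correct, but it takes a genuinely different route from the paper's. You do strong induction on $b$ and compare $a$ with the leading exponent $c$ of $b$, whereas the paper inducts on $m$ and compares $a$ with the leading exponent of $m$ itself, writing $m \nnf k^p+q$. Your case $a>c$ and the paper's case $a=p$ are the same easy equality case; in your case $a<c$ you regroup $m=k^c+(k^a+b')$ and apply the induction hypothesis twice (legitimately, since the second argument drops from $b$ to $k^a+b'$ and then to $b'$), which quietly pushes any carry propagation into the recursion, while the paper's corresponding case ($b\geq k^p$) uses a single application of the hypothesis to $q=k^a+c$; and your hard case is the single-digit carry $a=c$, $d=k-1$, which you settle via Lemma \ref{lemmExpPlusOne} plus the slack $(k-1)(\norm a_k+2)\geq 3$ coming from the $k-1$ copies of $k^{\nf k a}$ inside $\nf k b$, whereas the paper's hard case ($a<p$ and $b<k^p$) instead expands $b\nnf\sum_{i=a}^{p-1}(k-1)k^i+q$ using Lemma \ref{lemmExpNFProp} and lower-bounds $\norm b_k$ by $\norm{k^{p-1}+q}_k$, again finishing with Lemma \ref{lemmExpPlusOne}. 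Your argument is somewhat more elementary (it never needs Lemma \ref{lemmExpNFProp}) and makes the carry mechanism explicit; the paper's is keyed directly to the normal form of $m$, needs only one inductive application per case, and reuses a structural lemma that is needed elsewhere anyway. One cosmetic remark: under the paper's literal definition $\nf k n = k^{\nf k r}+\nf k b$ a trailing summand $\nf k 0 = 0$ is kept, so in your base case $\norm{k^a}_k=\norm a_k+3$ rather than $1+\norm a_k$; the claimed bound then holds with equality, so nothing in your argument breaks.
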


\longproof{
\proof
Induction on $m$. Write $m \nnf k^p + q$.
Consider the following cases.
\begin{enumerate}[label*={\sc Case \arabic*},wide, labelwidth=!, labelindent=0pt]

\item\label{itExpMinOne} ($ a = p$). Then also $q = b$, so that
\[  \norm a_k + \norm b_k + 2  =  \norm p_k + \norm q_k + 2 = \norm m_k. \]

\item ($  b \geq k^{p}$). Then, $b \nnf k^p + c$ for $ c = q- k^a $, and the induction hypothesis yields
\begin{align*}
\norm a_k + \norm b_k + 2 & = \norm a_k + \norm p_k + \norm c_k + 4 \\
&\geq^{\text{\sc ih}} \norm p_k + \norm{k^a + c}_k + 2 = \norm p_k + \norm{q}_k + 2 = \norm m_k.
\end{align*}

\item ($ a <  p$ and $  b <  k ^p $).
From $k^p>b = k^p + q - k^a$ we obtain $q < k^a $.
Thus by Lemma \ref{lemmExpNFProp},
\[b = ( k^p - k^a ) +  q \nnf \sum _{ i = a }^{ p-1 } (k-1) k^i + q. \]
Hence,
\[
\norm b_k \geq \norm{k^{p-1} + q}_k = \norm {p - 1}_k +  \norm q_k + 2
\geq \norm { p }_k + \norm q_k - 1,
\]
where the last inequality is by Lemma \ref{lemmExpPlusOne},
so that
\[
\norm a_k + \norm b_k + 2 \geq \norm a_k + \norm { p }_k + \norm q_k + 1 \geq \norm m_k.
\]
\end{enumerate}
\endproof
}

In particular, if $m\nnf k^p+q$ and $k^a+b=m$, then we have that $ \norm p_k +\norm q_k+2=  \norm m_k\leq \norm a_k+\norm b_k +2 $.
From this and an easy induction on term complexity, we obtain that $\mathcal E$ is norm minimal.

\begin{theorem}\label{theoExpNormMin}
If $\tau \in \mathbb E_k$ and $m=\val \tau$, then $\norm m_k \leq \norm \tau$.
\end{theorem}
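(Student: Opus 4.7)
The proof will proceed by strong induction on $\norm \tau$, with a case analysis on the outermost structure of $\tau$. The trivial cases are $\tau = 0$ (for which $\norm{\val \tau}_k = 1 = \norm \tau$) and $\tau = k^\sigma$. In the latter case, the normal form of $k^{\val \sigma}$ is $k^{\nf k{\val \sigma}}$ (trailing $+0$ being suppressed, as is implicit in the characterisation preceding Lemma \ref{lemmExpNFProp}), so that $\norm{\val \tau}_k = 1 + \norm{\val \sigma}_k$; applying the induction hypothesis to $\sigma$ then yields $\norm{\val \tau}_k \leq 1 + \norm \sigma = \norm \tau$.

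The main case is $\tau = \sigma + \rho$. Here I would first use associativity of $+$ to rewrite $\tau$ so that its leading summand $\sigma$ is \emph{atomic}, meaning either $0$ or of the form $k^{\sigma'}$. If $\sigma = 0$, then $\val \tau = \val \rho$ and the induction hypothesis applied to $\rho$ (which has strictly smaller norm) gives $\norm{\val \tau}_k \leq \norm \rho < \norm \tau$. Otherwise, $\sigma = k^{\sigma'}$, so $\val \tau = k^{\val{\sigma'}} + \val \rho$, and Lemma \ref{lemmExpNormMin} yields
\[\norm{\val \tau}_k \leq \norm{\val{\sigma'}}_k + \norm{\val \rho}_k + 2.\]
Applying the induction hypothesis separately to $\sigma'$ and to $\rho$ (each of strictly smaller norm than $\tau$) then bounds the right-hand side by $\norm{\sigma'} + \norm \rho + 2 = \norm{k^{\sigma'} + \rho} = \norm \tau$, closing the induction.

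The arithmetic heavy lifting has already been carried out in Lemma \ref{lemmExpNormMin}, so almost nothing substantive remains; the main thing to double-check is that the reduction to an atomic leading summand is legitimate. This is where the convention of treating terms modulo associativity is essential: any $\tau$ whose leading symbol is $+$ can be reparsed as $a_0 + (a_1 + \cdots + a_{n-1})$ with $a_0$ atomic, and both $a_0$ and the remainder have strictly smaller norm than $\tau$. No appeal to commutativity is made, which is important since the ordering of exponents is what governs the normal-form condition.
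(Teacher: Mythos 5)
Your proof is correct and follows essentially the same route as the paper, whose argument is exactly the induction on term complexity with Lemma \ref{lemmExpNormMin} doing the arithmetic work. Your extra care with the atomic leading summand (via associativity) and with the pure-power case $\tau = k^\sigma$ --- where applying the lemma with $b=0$ would be too lossy and one must instead note $\nf{k}{k^{\val\sigma}} = k^{\nf{k}{\val\sigma}}$ --- correctly fills in the details the paper leaves implicit.
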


\subsection{Maximality of Base Change}

We have seen that hereditary exponential notation satisfies norm minimality.
Let us now show that it is base-change maximal as well.
This will follow from the next lemma.
If $\mathcal F$ is a normalized notation system, say that $\mathcal F$ is base-change maximal below $m\in \mathbb N$ if, whenever $\tau \in \mathbb T^\mathcal F_k$ and $\val \tau < m$, it follows that $ \val{\bch \tau {^\ell} } \leq \val{ \bch{\nf{k}\tau}{^\ell}} $.
Recall from Remark \ref{remMaxToMon} that, if $\mathcal F$ is base-change maximal below $m$, then whenever $x<y<m$, we may conclude that $\bch x{\bases k\ell} < \bch y{\bases k\ell}$.
As we wish to appeal to this property in the proof of the following lemma, we will assume inductively that hereditary exponential notation is base-change maximal below $m$.

\begin{lemma}\label{lemmExpMaxBCH}
Fix $\ell > k \geq 2$ and write $\bch x{}$ instead of $\bch x {\bases k\ell}$.
Suppose that the normalized notation system $\mathcal E$ is base-change maximal below $m$.
If $m=k^a+b$, then
\[\bcn m {\bases k\ell} \geq \ell^{\bcn a{\bases k\ell}} + \bcn b{\bases k\ell}.\]
\end{lemma}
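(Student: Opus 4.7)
The plan is to mimic the proof of Lemma~\ref{lemmMultMax}, augmented with the ``borrowing'' manipulations needed for exponential normal forms. Write $m$ in full normal form as $\sum_{i=1}^{t} c_i k^{e_i}$ with $e_1 > \cdots > e_t \geq 0$ and $1 \leq c_i \leq k-1$. From $k^a \leq m < k^{e_1+1}$ one obtains $a \leq e_1$, so there are two cases.

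If $a = e_j$ for some $j$, then the normal form of $b = m - k^a$ is obtained from that of $m$ by removing one copy of $k^{e_j}$, giving $\bch b{\ell} = \bch m{\ell} - \ell^{\bch{e_j}{\ell}}$ and equality in the claim. Otherwise, let $j$ be maximal with $e_j > a$. Applying Lemma~\ref{lemmExpNFProp}.\ref{itExpNFPRopOne} to expand $k^{e_j} - k^a \nnf \sum_{i=0}^{e_j-a-1}(k-1)k^{a+i}$, and Lemma~\ref{lemmExpNFProp}.\ref{itExpNFPRopTwo} to glue the pieces together, yields the normal form
\[
b \nnf \sum_{i<j} c_i k^{e_i} \; + \; (c_j-1)k^{e_j} \; + \sum_{i=0}^{e_j-a-1}(k-1)k^{a+i} \; + \sum_{i>j} c_i k^{e_i}
\]
(omitting the $(c_j-1)k^{e_j}$ term when $c_j=1$). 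Substituting into $\ell^{\bch a{\ell}} + \bch b{\ell}$ and cancelling against $\bch m{\ell}$, the desired inequality reduces to the single estimate
\[
\ell^{\bch{e_j}{\ell}} \geq \ell^{\bch a{\ell}} + \sum_{i=0}^{e_j-a-1}(k-1)\ell^{\bch{(a+i)}{\ell}}.
\]

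Setting $d = e_j - a \geq 1$ and inducting on $d$, this follows from the key bound $k\ell^{\bch{(a+i)}{\ell}} \leq \ell^{\bch{(a+i+1)}{\ell}}$, which collapses $k\ell^{\bch{(a+i)}{\ell}} + (k-1)\ell^{\bch{(a+i+1)}{\ell}} \leq k\ell^{\bch{(a+i+1)}{\ell}}$ at each step. This bound itself combines $\ell \geq k$ with $\bch{(a+i+1)}{\ell} \geq \bch{(a+i)}{\ell} + 1$, the latter being local monotonicity (Remark~\ref{remMaxToMon}) applied to the pair $a+i < a+i+1$, both bounded by $e_j \leq e_1 \leq p < m$; the final inequality $p<m$ is forced by $k^p \leq m$ together with $k \geq 2$. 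The main obstacle is the bookkeeping needed to verify that the displayed expression really is the normal form of $b$ across the sub-cases ($a < e_t$, or $e_{j+1} < a < e_j$, with or without exhausting the borrowed level); once this is settled, the ensuing geometric-series estimate is routine.
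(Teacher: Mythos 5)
Your proof is correct, and it reaches the conclusion by a somewhat different route than the paper. The shared ingredients are the borrowing identity of Lemma \ref{lemmExpNFProp}.\ref{itExpNFPRopOne} and local monotonicity below $m$ via Remark \ref{remMaxToMon}; the difference is in the organization. The paper argues by induction on $m$: writing $m \gknf k^p+q$, it splits into $a=p$, then $a<p$ with $b\geq k^p$ (discharged by the induction hypothesis after rearranging), and finally $a<p$ with $b<k^p$, where the borrowed block is bounded by the geometric sum $\sum_{i=1}^{p-a}\ell^{\bch{p}{\bases k\ell}-i}<\ell^{\bch{p}{\bases k\ell}}/k$, using $\bch{(p-i)}{\bases k\ell}\leq\bch{p}{\bases k\ell}-i$. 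You avoid the induction on the value of $m$ entirely: by writing out the full normal form $\sum_i c_i k^{e_i}$, locating the unique digit position $e_j$ where the borrow occurs, and reading off the normal form of $b$ explicitly, you reduce everything to the single estimate $\ell^{\bch{e_j}{\bases k\ell}}\geq\ell^{\bch{a}{\bases k\ell}}+\sum_{i=0}^{e_j-a-1}(k-1)\ell^{\bch{(a+i)}{\bases k\ell}}$, which you prove by telescoping with $k\,\ell^{\bch{(a+i)}{\bases k\ell}}\leq\ell^{\bch{(a+i+1)}{\bases k\ell}}$ — the same monotonicity fact in single-step form, legitimately applied since $a+i+1\leq e_j\leq e_1<m$ (indeed $e_1<k^{e_1}\leq m$). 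Your bookkeeping for the normal form of $b$ checks out in both cases (when $a$ equals some $e_j$ you even get equality, consistent with the paper's Case 1), so there is no gap. What your route buys is a direct, induction-free argument that makes the exact shape of $\nf{k}{b}$ visible and absorbs the paper's Case 2 into the digit analysis; what it costs is precisely that explicit normal-form verification, which the paper sidesteps by inspecting only the leading term $k^p$ and recursing on $m$.
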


\shortproof{
\proof
Write $m \gknf k^p + q$.
The proof proceeds by induction on $m$; here we treat the critical case, where $ a <  p$ and $  b <  k ^p$.
We note that in this case $q < k^a$, and thus
\[b = ( k^p - k^a ) + q \nnf  \sum _{ i = a }^{p-1} (k-1) k^{i} + q, \]
hence
\begin{align*}
\bch b{}  = (k-1) \sum _{ i = a }^{p-1}  \ell^{\bch {i}{}} + \bch q{}  = (k-1) \sum _{ i = 1 }^{p-a}  \ell^{\bch {(p-i)}{}} + \bch q{} .
\end{align*}
Since $p<m$, we may use the assumption that $\mathcal E$ is base-change maximal below $m$ to obtain $\bch{(p-i)} {} \leq \bch{ p} {} -i$, and hence
\[  \sum _{ i = 1 }^{p - a}  \ell^{\bch {(p - i)}{}} \leq \sum _{ i = 1 }^{p - a }  \ell^{\bch {p }{} - i} = \dfrac{\ell^{\bch p{}}(\ell^{p - a } - 1)}{\ell^{p - a  }(\ell-1)} < \dfrac{\ell^{\bch p{}} }{ \ell-1 } \leq \frac { \ell^{\bch p{} }}{k },\]
so $\bch b{} < \nicefrac { (k-1)\ell^{\bch p{} }}{k } + \bch q{}$.

By monotonicity below $m$, available due to Remark~\ref{remMaxToMon}, we have that $\bcn a{} \leq \bcn p{} -1 $, so $\ell^{\bch a{}} \leq \nicefrac{\ell^{\bch p{}}}\ell < \nicefrac{\ell^{\bch p{}}} k$. Therefore,
\[
\ell^{\bch a{}} + \bch b{} < \frac{\ell^{\bch p{}}} k + \frac {(k-1) \ell^{\bch p{} }}{k } + \bch q{} =   \ell^{\bch p{}}  + \bch q{}= \bch m{}.
\]
\endproof
}

\longproof{
\proof
By induction on $m$.
Write $m \gknf k^p + q$ and consider the following cases.
\medskip
\begin{enumerate}[label*={\sc Case \arabic*},wide, labelwidth=!, labelindent=0pt]

\item\label{itExpMaxOne} ($ a = p $). Then also $  b=q$ and $\bch m{}   = \ell^{\bch a{}} + \bch b{}.$
\medskip

\item ($a < p$ and $  b \geq k ^{p}$). Then, $b \gknf k^p + c$ for some $c$, and the induction hypothesis yields
\begin{align*}
\ell^{\bch a{}} + \bch b{}  & \leq \ell ^{\bch a{}} + \ell ^{\bch p{}} + \bch c{}
 =  \ell^{\bch p{}} + \ell^{\bch a{}} + \bch c{} \\
&  \leq  \ell^{\bch p{}} + \bch{( k^{a} +  c ) }{} =   \ell^{\bch p{}} + \bch{q }{} = \bcn m{}.
\end{align*}
\medskip

\item ($ a <  p$ and $  b <  k ^p$). As in the proof of Lemma \ref{lemmExpNormMin}, $q < k^a$, and thus
\[b = ( k^p - k^a ) + q \nnf  \sum _{ i = a }^{p-1} (k-1) k^{i} + q, \]
hence
\begin{align*}
\bch b{}  = (k-1) \sum _{ i = a }^{p-1}  \ell^{\bch {i}{}} + \bch q{}  = (k-1) \sum _{ i = 1 }^{p-a}  \ell^{\bch {(p-i)}{}} + \bch q{} .
\end{align*}
Since $p<m$, we may use the assumption that $\mathcal E$ is base-change maximal below $m$ to obtain $\bch{(p-i)} {} \leq \bch{ p} {} -i$, and hence
\[  \sum _{ i = 1 }^{p - a}  \ell^{\bch {(p - i)}{}} \leq \sum _{ i = 1 }^{p - a }  \ell^{\bch {p }{} - i} = \dfrac{\ell^{\bch p{}}(\ell^{p - a } - 1)}{\ell^{p - a  }(\ell-1)} < \dfrac{\ell^{\bch p{}} }{ \ell-1 } \leq \frac { \ell^{\bch p{} }}{k }.\]
\end{enumerate}
By monotonicity below $m$, we have that $\bcn a{} \leq \bcn p{} -1 $, so $\ell^{\bch a{}} \leq \nicefrac{\ell^{\bch p{}}}\ell < \nicefrac{\ell^{\bch p{}}} k$. Therefore,
\[
\ell^{\bch a{}} + \bch b{} < \nicefrac{\ell^{\bch p{}}} k + \nicefrac {(k-1) \ell^{\bch p{} }}{k } + \bch q{} = \bch m{}.
\]
\endproof
}

\begin{theorem}\label{theoExpBCHMax}
If $2\leq k<\ell$, $\tau \in \mathbb E_k$, and $m=\val \tau$, then $\bch m {^{\ell}_k} \geq \val{\bch \tau {^{\ell}}}$.
\end{theorem}

\proof
Induction on term complexity using Lemma \ref{lemmExpMaxBCH}.
\endproof

In view of Proposition \ref{propMaxToMon}, we immediately obtain monotonicity of the base-change operation.

\begin{corollary}\label{corExpMon}
If $m<n$ and $2\leq k < \ell$, then $\bch m{\bases k\ell} < \bch n{\bases k\ell}$.
\end{corollary}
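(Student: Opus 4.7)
The plan is to invoke Proposition \ref{propMaxToMon} applied to the notation system $\mathcal E$, using the base-change maximality just established in Theorem \ref{theoExpBCHMax}. Essentially no new work is required; the proof is the packaging of maximality into monotonicity. I would first check that $\mathcal E$ meets the hypotheses of Proposition \ref{propMaxToMon}: it has $+$ as a primitive, and although there is no constant symbol $1$ available, the term $k^0 \in \mathbb E_k$ evaluates to $1$ for every $k$ and base-changes to $\ell^0$, which also has value $1$. Inspection of the proof of Proposition \ref{propMaxToMon} shows that this is exactly what is used: the only role of the ``$1$'' term is to form $\nf k m + 1$ and to ensure that base change preserves its value, and $k^0$ fulfills both roles.

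With these hypotheses verified, I would spell out the short chain of inequalities underlying the proposition to make the corollary self-contained. By induction on $n - m$ it is enough to treat the case $n = m+1$. Let $\tau := \nf{k}{m} + k^0 \in \mathbb E_k$, so that $\val \tau = m + 1 = n$, and hence $\tau$ is a term (not necessarily in normal form) whose normal form is $\nf k n$. Applying Theorem \ref{theoExpBCHMax} to $\tau$ gives
\[
\val{\bch \tau \ell} \;\leq\; \val{\bch{\nf k n}\ell} \;=\; \bch n{\bases k\ell}.
\]
On the other hand, $\bch \tau \ell = \bch{\nf k m}\ell + \ell^0$, whose value strictly exceeds $\val{\bch{\nf k m}\ell} = \bch m{\bases k\ell}$. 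Chaining these inequalities yields $\bch m{\bases k\ell} < \bch n{\bases k\ell}$, as desired.

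There is no real obstacle to this argument: all the delicate content has already been absorbed into Lemma \ref{lemmExpMaxBCH} and Theorem \ref{theoExpBCHMax}, and the only small subtlety is that the ``constant $1$'' required by Proposition \ref{propMaxToMon} must be realized in $\mathcal E$ by $k^0$ rather than by an honest $k$-independent symbol. I would either flag this briefly or, alternatively, simply write the proof as a direct citation of Proposition \ref{propMaxToMon} together with Theorem \ref{theoExpBCHMax}.
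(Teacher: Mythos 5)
Your proposal is correct and takes essentially the same route as the paper, which obtains the corollary immediately from Proposition \ref{propMaxToMon} together with the base-change maximality of $\mathcal E$ established in Theorem \ref{theoExpBCHMax}. Your extra observation that $\mathcal E$ has no literal constant $1$ and that $k^0$ (whose base change $\ell^0$ still has value $1$) fills that role in the argument is a legitimate point of care, but it does not alter the substance of the proof.
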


From monotonicity, we readily obtain normal form preservation for hereditary exponential normal forms.

\begin{lemma}\label{lemmExpPreserve}
If $m\in \mathbb N$ and $\ell > k\geq 2$, then
\[\nf{\ell }{\bch m {\bases k\ell}} = \bch {\nf k m}{^\ell}.\]
\end{lemma}

\begin{proof}
Write $\bch{}{}$ for $\bch{}{\bases k\ell}$.
It suffices to show that if $k^a+b$ is in normal form then so is $\ell^{\bcn a {\bases k\ell} } + {\bcn b {\bases k\ell} }$, since then the result follows by an easy induction on $\|\nf k m \|$.

We write $k^a+b = r\cdot k^a+c $, where $c<k^a$ and $0<r<k$, and proceed by a secondary induction on $s\leq r $ to prove that $ (s+1)\cdot \ell^{\mchb {}{} a }  > s\ell^{\mchb {}{} a  }+ \mchb {}{} c$.
The claim will then follow, since $b\gknf (r-1) k^{ {} {}{a}  }+   {} {}{c} $ and
\begin{align*}
\ell^{\mchb {}{} a +1} & =  \ell \cdot \ell^{\mchb {}{} a }   \geq (r+1)\cdot \ell^{\mchb {}{} a }\\
& > \ell^{\mchb {} {}{a}  }+ (r-1) \ell^{\mchb {} {}{a}  }+ \mchb {} {}{c} = \ell^{\mchb {} {}{a}  }+ \mchb {} {}{b},
\end{align*}
as needed.

For $s=1$,  $c <k^a$ and Corollary~\ref{corExpMon} yields $\mchb {}{} c< \mchb {}{} k^a = \ell^{\mchb {}{} a} $
Hence, $(1+1)\ell^{\mchb {}{} a} >  \ell^{\mchb {}{} a}+ \mchb {}{} c$.
Otherwise,
\[(s+1)\cdot \ell^{\mchb {}{} a  }  = \ell^{\mchb {}{} a  } + s\cdot \ell^{\mchb {}{} a  }    \stackrel{\text{\sc ih}}> \ell^{\mchb {}{} a  } + (s-1) \ell^{\mchb {}{} a  }  + \mchb {}{} c = s \ell^{\mchb {}{} a  }  + \mchb {}{} c. \]
\end{proof}

\done

\section{Elementary functions}\label{secElementary}

In this section we consider an extension of $\mathcal E$ with product and study whether hereditary exponential normal forms are still optimal in this context.
Define $\mathcal L = \{0,x+y,x\cdot y,k^x\}$.
Then for example,
\begin{align*}
(5^2+5^1+5^0)\cdot ( 5^1+5^0) & = 5^3+2\cdot 5^2+2\cdot 5^1 + 5^0 \\
&= 5^3+ 5^2+5^2+5^1 + 5^1 + 5^0,
\end{align*}
although the left hand side has the smallest norm of the three.
This tells us that exponential normal forms no longer give minimal norms, even if we allow for coefficients below $k$.
However, as we will see, we still obtain maximality under base change. Below and throughout this section, $\nf\cdot \cdot $ denotes the normal form operator for $\mathcal E$, as used in the standard Goodstein theorem.

\begin{theorem}\label{theoElemMax}
Let $\ell \geq k\geq 2$,  $m\in \mathbb N$, and $\tau \in \mathcal L_k $. Then, $\val{\bch{\tau}{^\ell}} \leq \val{\bch { \nf k \tau} {^\ell}}$.
\end{theorem}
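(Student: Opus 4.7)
The plan is to proceed by structural induction on $\tau \in \mathbb T^\mathcal L_k$. The cases $\tau = 0$, $\tau = \sigma_1 + \sigma_2$ and $\tau = k^\sigma$ are handled essentially as in the proof of Theorem~\ref{theoExpBCHMax}, combining the induction hypothesis with Corollary~\ref{corExpMon} and a superadditivity lemma for addition (stated below). The genuinely new case is $\tau = \sigma_1 \cdot \sigma_2$, for which I would establish a product analogue of that superadditivity lemma.

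First I would record the additive superadditivity property: for all $a, b \in \mathbb N$ and $\ell \geq k \geq 2$,
\[
\bch{a}{\bases{k}\ell} + \bch{b}{\bases{k}\ell} \leq \bch{a + b}{\bases{k}\ell}.
\]
This is immediate from Theorem~\ref{theoExpBCHMax} applied to the term $\nf{k}{a} + \nf{k}{b} \in \mathbb E_k$, whose value is $a + b$. By iteration, the same inequality holds for any finite sum $\sum_i a_i$.

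The key new ingredient is the product lemma: $\bch{a}{\bases{k}\ell} \cdot \bch{b}{\bases{k}\ell} \leq \bch{a \cdot b}{\bases{k}\ell}$ for all $a, b \in \mathbb N$ and $\ell \geq k \geq 2$. To prove it, I would write $a$ and $b$ in hereditary exponential normal form as sums of powers $a = \sum_{i} k^{\alpha_i}$ and $b = \sum_{j} k^{\beta_j}$, and distribute to obtain the numerical identity $a \cdot b = \sum_{i,j} k^{\alpha_i + \beta_j}$. Iterated additive superadditivity across this finite sum gives $\bch{a \cdot b}{\bases{k}\ell} \geq \sum_{i,j} \ell^{\bch{\alpha_i + \beta_j}{\bases{k}\ell}}$, and a further application of additive superadditivity inside each exponent, combined with monotonicity of $x \mapsto \ell^x$, yields $\ell^{\bch{\alpha_i + \beta_j}{\bases{k}\ell}} \geq \ell^{\bch{\alpha_i}{\bases{k}\ell}} \cdot \ell^{\bch{\beta_j}{\bases{k}\ell}}$. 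Factoring the resulting double sum as a product of one-dimensional sums then delivers the target bound $\bch{a}{\bases{k}\ell} \cdot \bch{b}{\bases{k}\ell}$.

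With both lemmas in hand the main induction closes routinely: in the multiplicative case, the induction hypothesis gives $\val{\bch{\sigma_i}{\ell}} \leq \bch{\val{\sigma_i}}{\bases{k}\ell}$ for $i = 1, 2$, and the product lemma then yields
\[
\val{\bch{\tau}{\ell}} \;=\; \val{\bch{\sigma_1}{\ell}} \cdot \val{\bch{\sigma_2}{\ell}} \;\leq\; \bch{\val{\sigma_1}}{\bases{k}\ell} \cdot \bch{\val{\sigma_2}}{\bases{k}\ell} \;\leq\; \bch{\val{\tau}}{\bases{k}\ell} \;=\; \val{\bch{\nf{k}{\tau}}{\ell}}.
\]
The main obstacle I expect is the product lemma; the delicate point is that the distributed sum $\sum_{i,j} k^{\alpha_i + \beta_j}$ is not in hereditary exponential normal form in general (exponents may repeat and need not appear in decreasing order), so one cannot appeal directly to Theorem~\ref{theoExpBCHMax} for it, and must instead route the inequality through the additive superadditivity property as sketched above.
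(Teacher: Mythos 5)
Your proof is correct, but it organizes the argument differently from the paper, most visibly in the product case. The paper runs a primary induction on $\val\tau$ (with a secondary induction on $\norm\tau$): writing $\nf k\sigma = k^\alpha+\beta$ and $\nf k\rho = k^\gamma+\delta$, it distributes only the leading term, introduces the auxiliary term $\tau' = k^\alpha\cdot\delta + k^\gamma\cdot\beta + \beta\cdot\delta$ of smaller value, applies the induction hypothesis to $\tau'$, and then invokes Theorem \ref{theoExpBCHMax} on the exponential term $k^{\alpha+\gamma}+\nf k{\tau'}$. You instead keep the induction purely structural and push all the arithmetic into two numerical facts about the map $n\mapsto \bch n{\bases k\ell}$: super-additivity and super-multiplicativity, both ultimately reduced to Theorem \ref{theoExpBCHMax}. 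Your route is more modular and isolates a reusable statement (the base-change map is super-additive and super-multiplicative on values), at the cost of fully distributing $\nf ka\cdot\nf kb$ into a double sum; the paper's route avoids that expansion but needs the value-induction to handle $\tau'$. One remark on the ``obstacle'' you flag: Theorem \ref{theoExpBCHMax} applies to \emph{arbitrary} terms of $\mathbb E_k$, not only normal forms, so you may apply it directly to the distributed term $\rho=\sum_{i,j}k^{\nf k{\alpha_i}+\nf k{\beta_j}}\in\mathbb E_k$, whose value is $ab$ and whose base-changed value is exactly $\bch a{\bases k\ell}\cdot\bch b{\bases k\ell}$; this gives your product lemma in one step and makes the detour through iterated super-additivity inside the exponents unnecessary (though your detour is sound). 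Minor points to make explicit when writing this up: the identity $\bch{(k^c)}{\bases k\ell}=\ell^{\bch c{\bases k\ell}}$, which you use tacitly when passing from $\bch{(k^{\alpha_i+\beta_j})}{\bases k\ell}$ to $\ell^{\bch{(\alpha_i+\beta_j)}{\bases k\ell}}$, and the degenerate cases $a=0$ or $b=0$; also note that the paper states Theorem \ref{theoExpBCHMax} for $\ell=k+1$ but uses it (as you do) for general $\ell\geq k$, which is justified since Lemma \ref{lemmExpMaxBCH} is proved for all $\ell>k$.
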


\shortproof{

\begin{proof}
The theorem is proven by induction on $\val \tau$ with a secondary induction on $\norm \tau$.
Write $\bch{}{}$ for $\bch{}{^\ell}$.
The key step is reducing a product to a term in $\mathbb E_k$.
Suppose that $\tau = \sigma \cdot \rho$, with both terms having non-zero value.
Write $\nf{k}\sigma = k^\alpha + \beta $ and $\nf{k}\rho = k^\gamma+ \delta $.
Define $\tau'= k^\alpha \cdot \delta +  k^\gamma \cdot \beta + \beta\cdot \delta$.
Then,
\begin{align*}
\bcn \tau \ell & = \bcn \sigma \ell\cdot  \bcn \rho \ell  \leq ^{\text{\sc ih}} \bcn {(k^\alpha + \beta )} \ell \cdot \bcn {(k^\gamma+ \delta)} \ell \\
&= \ell^{\bcn  \alpha \ell + \bcn \gamma\ell} + \bcn{\tau'}\ell \leq^{\text{\sc ih}} \ell^{\bcn  \alpha \ell + \bcn \gamma\ell} + \bcn{\nf{k}{\tau'}}\ell  = \bcn{(k^{\alpha+\gamma} + \nf{k}{\tau'} )}\ell,
\end{align*}
where the first inductive step is by the secondary induction hypothesis on $\norm{\sigma},\norm{\rho}<\norm{\tau}$ and the second is the primary induction hypothesis on $|\tau'|<|\tau|$.
Note that $k^{\alpha+\gamma} + \nf{k}{\tau'} \in \mathbb E_k$, and moreover
\[\val\tau = \val{k^{\alpha+\gamma} + {\tau'}} = \val{k^{\alpha+\gamma} + \nf{k}{\tau'}}.\]
Thus by Theorem \ref{theoExpBCHMax},
\[\val{\bcn{(k^{\alpha+\gamma} + \nf{k}{\tau'} )}\ell} \leq \val{ \bcn{\nf{k}{k^{\alpha+\gamma} + \nf{k}{\tau'} }}\ell}
= \val{\bcn {\nf{k} \tau}\ell }.\]
We conclude that $\val{\bcn \tau \ell} \leq \val{ \bcn {\nf{k}\tau}\ell }$, as required.
\end{proof}
}

\longproof{

\begin{proof}
By induction on $\val \tau$ with a secondary induction on $\norm \tau$.
Write $\bch{}{}$ for $\bch{}{^\ell}$.
\medskip

\begin{enumerate}[label*={\sc Case \arabic*},wide, labelwidth=!, labelindent=0pt]

\item ($\tau = 0$). Trivial.
\medskip

\item ($\tau = \sigma + \rho$). Let $ \sigma' = \nf k{ \sigma} $ and $\rho' = \nf k{ \rho} $.
Note that $\val{\sigma},\val{\rho} \leq \val \tau$ and $\norm {\sigma}, \norm{\rho } <\norm {\tau}$.
Thus we may use the secondary induction hypothesis to see that
\[\bch {\tau}{} = \bcn{ ( \sigma+\rho ) }{} =\bcn \sigma{} +\bcn \rho {} \leq \bcn{   \sigma'}{}+ \bcn{\rho'  } {} = \bcn{ (  \sigma' + \rho' ) } {}  \leq \bcn{\nf {k}{ \sigma' + \rho'}}{},\]
where the last inequality uses the fact that $ { { \sigma' + \rho'}}{}\in\mathbb E_k$ and Theorem \ref{theoExpBCHMax}.
Since $\nf {k}{ \sigma' + \rho'} = \nf {k}{ |\tau|} = \nf {k}{ \tau}$, the required inequality follows.

\item ($\tau = k^\sigma$). Then,
$\bcn\tau\ell=\ell^{\bcn \sigma \ell}  \leq^{\text{\sc ih}} \ell^{\bcn{\nf{k}\sigma} \ell} =\bcn{\nf{k}\tau} \ell . $

\item ($\tau = \sigma \cdot \rho$).
If $\val \sigma = 0$, an easy induction shows that $\bcn \sigma{} = 0$, since $\sigma$ is either $0$, a sum of terms with value $0$, or a product of terms one of which has value $0$.
From $\norm \sigma < \norm \tau$ and the secondary induction hypothesis we have that
\[\bcn \tau \ell = \bcn \sigma \ell \cdot \bcn \rho \ell \leq \bcn 0{\bases k\ell}\cdot \bcn \rho \ell = 0 =\bcn {\nf k\tau}\ell ,\]
so we may assume that $\val \sigma >0$ and, by the same reasoning, that $\val \rho > 0$.
Write $\nf{k}\sigma = k^\alpha + \beta $ and $\nf{k}\rho = k^\gamma+ \delta $.
Define $\tau'= k^\alpha \cdot \delta +  k^\gamma \cdot \beta + \beta\cdot \delta$.
Then,
\begin{align*}
\bcn \tau \ell & = \bcn \sigma \ell\cdot  \bcn \rho \ell  \leq ^{\text{\sc ih}} \bcn {(k^\alpha + \beta )} \ell \cdot \bcn {(k^\gamma+ \delta)} \ell \\
&= \ell^{\bcn  \alpha \ell + \bcn \gamma\ell} + \bcn{\tau'}\ell \leq^{\text{\sc ih}} \ell^{\bcn  \alpha \ell + \bcn \gamma\ell} + \bcn{\nf{k}{\tau'}}\ell  = \bcn{(k^{\alpha+\gamma} + \nf{k}{\tau'} )}\ell,
\end{align*}
where the first induction is by the secondary induction hypothesis on $\norm{\sigma},\norm{\rho}<\norm{\tau}$ and the second is the primary induction hypothesis on $|\tau'|<|\tau|$.
Note that $k^{\alpha+\gamma} + \nf{k}{\tau'} \in \mathbb E_k$, and moreover
\[\val\tau = \val{k^{\alpha+\gamma} + {\tau'}} = \val{k^{\alpha+\gamma} + \nf{k}{\tau'}}.\]
Thus by Theorem \ref{theoExpBCHMax},
\[\val{\bcn{(k^{\alpha+\gamma} + \nf{k}{\tau'} )}\ell} \leq \val{ \bcn{\nf{k}{k^{\alpha+\gamma} + \nf{k}{\tau'} }}\ell}
= \val{\bcn {\nf{k} \tau}\ell }.\]
We conclude that $\val{\bcn \tau \ell} \leq \val{ \bcn {\nf{k}\tau}\ell }$, as required.
\end{enumerate}
\end{proof}
}

Theorem \ref{theoElemMax} might seem surprising, as hereditary exponential normal forms do not involve multiplication, yet they remain base-change maximal even compared to arbitrary elementary terms. Later, we will see that this result leads to a wide generalization of Goodstein's principle.

\section{Termination times of the Goodstein processes}\label{secLowerBound}

In this section we provide a proof that the Goodstein processes terminate by comparing them to Hardy functions.
These are functions defined by transfinite induction up to $\varepsilon_0$.
Our analysis will yield additional information which will also lead to independence results.
We begin by reviewing these functions and their properties.


\subsection{Hardy functions and independence}

Recall that $\varepsilon_0$ is the first fixed point of the function $\xi\mapsto\omega^\xi$, and by the Cantor normal form theorem, every non-zero $\xi<\ve_0$ can be written in the form $\omega^\alpha+\beta$ with $\alpha,\beta<\xi < \omega^{\alpha+1}$.
The Cantor normal form of $0$ is $0$.

\begin{definition}\label{defFS}
For $\xi<\ve_0$ and $n \in \mathbb N$, we define $\xi[n]$ recursively by
\begin{itemize}
    \item $0[ n ]  :=   1 [ n ] :=0 $;

    \item $(\omega^\alpha+\beta)[ n ] := \omega^\alpha+\beta[n] $ if $\omega^\alpha+\beta$ is in Cantor normal form and $\be>0$;

    \item $(\omega^{\alpha+1}) [ n ] := \omega^\alpha  n $;
    
        \item $(\omega^{\alpha}) [ n ] := \omega^{\alpha[n]}$ if $\alpha$ is a limit.
\end{itemize}
\end{definition}

These fundamental sequences satisfy the essential properties that $\fs \al n<\al$ if $\al\neq 0$ and, if $\al<\ve_0$ is a limit ordinal, then $(\al[n])_{n<\om}$ is an increasing sequence converging to $\al$.
Another key property of these fundamental sequences is the {\em Bachmann property;} see \cite{Schmidt77,WeiermannBSL} for details.

\begin{proposition}[Bachmann property]\label{propBachmann}
If $\alpha,\beta < \ve_0$ and $k \in\mathbb N$ satisfy $\fs \al k < \beta < \al$, then $\fs\al k \leq \fs \be 1$.
\end{proposition}

For example, $\fs {\om^\om} 4=\om^ 4$ and $\om ^4<\om^ 6<\om^\om$, while $\om ^4< \om^ 5 =\fs{ \om^6  }1 $.
The intuition here is that if $\be\in (\fs \al k,\al)$ and we iteratively apply fundamental sequences to $\be$, we will be `stuck' in the interval $(\fs \al k,\al)$, unless we pass through $\fs \al k$.
Note that this may fail if we replace $1$ by $0$, as $\fs{\om^6}0 = 0< \fs {\om^\om} 4$.

We also need to identify conditions under which we can guarantee that $\xi[n]\geq \zeta$ for $\zeta<\xi$.
To this end, define the {\em maximal coefficient} of $\zeta$, $\mc\zeta$, to be the largest natural number appearing in $\zeta$ when written in Cantor normal form.
To be precise, $\mc 0=0$, and if $\zeta = \omega^\alpha n+\beta$ with $\beta<\omega^\alpha$, then $\mc\zeta = \max\{n,\mc\alpha,\mc\beta\}$.
The following is proven in e.g.~\cite{FWTheta}.

\begin{lemma}\label{lemmFSN}
Let $\xi<\ve_0$ and $n\in\mathbb N$.
Then,
\begin{enumerate}

\item $\mc{\fs \xi n } \leq \max \{\mc \xi, n\}$, and

\item if $\zeta<\xi$ and $\mc\zeta< n$ then $\zeta\leq \xi[n]$.

\end{enumerate}
\end{lemma}

Fundamental sequences can be used to define fast-growing functions on the natural numbers, such as the {\em Hardy functions} below.

\begin{definition}
    For $x\in \mathbb{N}$ and $\alpha  < \varepsilon_0$, we define
    \begin{itemize}
    \item $H_0 (x) = x$;
     \item $H_\al (x) = H_{\al[x]}(x + 1)$ if $\al\neq 0$.
\end{itemize}

\end{definition}

The intuition is that each $H_\alpha$ is an increasing function, which grows more quickly for larger $\alpha$.

\begin{theorem}[\cite{BCW}]\label{theoOrdMon}\

\begin{enumerate}

\item If $x<y$ and $\al<\ve_0$, then $H _\al(x) < H _\al(y)$.

\item If $\al<\be$ and $x > \mc\al$, then $H _\al(x) < H _\be(x)$.

\end{enumerate}
\end{theorem}

The totality of these functions cannot be proven over weak theories.
To make this precise, define $\omega_n$ to be a tower of $n$ $\omega$'s, i.e.~$\omega_0 = 1 $ and $\omega_{n+1} = \omega^{\omega_n}$.

\begin{theorem}[\cite{FairtloughWainer,BuchholzTotal}]\label{theoHIndep}
For $n\in\mathbb N \setminus\{0\}$,
\begin{enumerate}

\item 
${\sf I}\Sigma_n$ proves that $H_\alpha $ is total if and only if $\alpha< \omega_{n+1}$.

\item
If $f$ is a provably total computable function in ${\sf I}\Sigma_n$, then there is $N\in \mathbb N$ such that for all $x>N$, $f(x) < H_{ \omega_{n+1}}(x)$. 

\end{enumerate}
\end{theorem}

Thus a general strategy for proving independence of $\Pi^0_2$ statements is showing that they require witnesses growing faster than suitable Hardy functions.
This approach based on the Hardy function and variants has been used in various classic independence results by e.g.~Cichon~\cite{CichonIndependence}, Loebl and Nešetřil~\cite{Loebl}, and, of course, Kriby and Paris' proof of independence of Goodstein's theorem~\cite{Kirby}.
The following is useful in establishing such lower bounds.

\begin{proposition}\label{propHLower}
Let $x> 0$, $\xi<\varepsilon_0$, and $(\xi_n)_{n \leq \ell}$ be a sequence of ordinals below $\ve_0$ such that
\begin{enumerate}

\item $\xi[x]< \xi_0 \leq \xi  $,

\item for all $n < \ell$, $ {\xi_n}[x+n] \leq \xi_{n+1} \leq \xi_n$, and

\item 

 $\xi_\ell = 0$.

\end{enumerate}

Then, $\ell + x \geq H_\xi(x)$.
\end{proposition}

\begin{proof}
 We prove the lemma by induction on $\ell$.
 The claim is vacuously true when $\ell = 0$ (i.e., $\xi_0 = 0$), so we assume otherwise.
 Consider two cases.
 \begin{Cases}

\item ($\xi[x] <\xi_1  $).
Then, the sequence $(\xi_{n+1})_{n \leq \ell-1}$ once again satisfies the assumptions, since ${\xi_{n+1}}[x+n] \leq {\xi_{n+1}}[x+n+1] \leq \xi_{n+2}  $.
By induction on $\ell$, we obtain $\ell - 1 + x \geq H_\xi(x)$.
\smallskip

\item ($\xi[x] = \xi_1  $).
Then, the sequence $(\xi_{n+1})_{n \leq \ell-1}$ satisfies the assumptions, but for $\xi$ replaced by $\xi[x]$ and $x$ replaced by $x+1$, since ${\xi_{n+1}}[(x+1)+n] = {\xi_{n+1}}[x+n+1] \leq \xi_{n+2}  $.
Thus we obtain $\ell+x = (\ell - 1)+(x+1) \geq H_{\xi[x]}(x+1) = H_\xi(x)$.
 \end{Cases}
\end{proof}

We will also need to establish {\em upper} bounds in terms of the Hardy functions.
For this, we use the following.

\begin{proposition}\label{propHUpper}
Let $x\in\mathbb N$ and $(\xi_n)_{n \leq \ell}$ be a sequence of ordinals below $\ve_0$ such that $ \xi_0 \leq \xi  $, $\xi_{n}>0$ if $n<\ell$, and for all
$n < \ell$, $ \xi_{n+1} < \xi_n$ and $\mc{\xi_{n+1}} \leq x+n $.
Then, $\ell + x < H_\xi(x+1)$.
\end{proposition}

\begin{proof}
If $\xi_0=0$, then $\ell+x = x <x+1 = H_0(x+1) $.
Otherwise, consider the sequence $(\xi_{n+1})_{n \leq \ell-1}$; we claim that it satisfies the assumptions of the proposition, but with $\xi$ replaced by $\xi[x+1]$ and $x$ replaced by $x+1$.
By Lemma \ref{lemmFSN}, $\xi_1\leq \xi [x+1] $, and $\mc{\xi_{n+2}} \leq x+n+1 = (x+1)+n $.
So, induction on $\ell$ yields $(\ell-1)+(x+1) < H_{\xi[x+1]}( x+ 2) =  H_\xi(x+1) $, i.e.~$\ell +x < H_\xi(x+1)$, as needed.
\end{proof}

\subsection{Termination of the weak Goodstein process}

We now show that the weak Goodstein process terminates in time $H_{\omega^\omega}$.
In this subsection, all notation (e.g.~$\nf k\cdot$) refers to the notation system $\mathcal M$ of Section~\ref{secWeak}.
If $\mathcal F$ is a normalized notation system and $m\in \mathbb N$, we define ${\rm G}^\mathcal F_\infty(m)$ to be the least $\ell$ such that ${\rm G}^\mathcal F_\ell (m) = 0$, and ${\rm G}^\mathcal F_\infty(m) = \infty$ if no such $\ell$ exists.
More generally, $\goodp Fm\infty  r$ is the least $\ell$ such that $\goodp F m\ell r= 0$, if it exists.
We will show that $ \goodp Fm\infty r  \approx  H_{\mchb \omega r m}(r) $, so the left hand is finite since the right hand is.

To make this precise, we need to introduce an ordinal assignment for terms.
We note that the operations in $\mathcal M$ are well-defined on the ordinals, and as such we can consider expressions with base $\omega$.
For $\tau \in \mathbb M_k$, we define $\mchb \om {{}}\tau $ to be the result of replacing every occurrence of $k$ by $\omega$, i.e.~$\mchb \om {{}}0 =0$, $\mchb \om {{}}{(\tau+\sigma)} =\mchb \om{{}} \tau+ \mchb \om{{}} \sigma $, and $\mchb \om {{}} {k\tau} = \om \mchb \om {{}} \tau$.
If $m\in\mathbb N$, then $\mchb \omega k m := \val{\mchb\omega{{}} {\nf km} }$.

To continue, we need to calculate the normal form of $m-1$.
\shortproof{The following is easy to check.}

\begin{lemma}\label{lemmMminus}
Suppose that $0 < m \gknf k p+r$ and $k\geq 2$.
\begin{enumerate}

\item If $r>0$, then $m-1 \gknf k p+(r-1)$.

\item If $r = 0$, then $m-1 \gknf k (p-1)+(k-1)$.

\end{enumerate}
\end{lemma}

\longproof{
\begin{proof}
We work out the second item, as the first is similar.
Since $m=kp$, we clearly have $k(p-1) \leq m-1 < kp $, so the normal form of $m-1$ is of the form $k(p-1) + r'$, and plainly we must have $r' = k-1$.
\end{proof}
}

\shortproof{Inspection on Definition~\ref{defFS} then yields the following.}

\begin{lemma}\label{lemmMFS}
For every $m\in \mathbb N$ and $k\geq 2$,
\[\mchb \omega k {(m-1)} = (\mchb \omega km )[k-1] .\]
\end{lemma}

\longproof{
\begin{proof}
Write $m\gknf kp + s$ and consider two cases according to Lemma \ref{lemmMminus}.
If $s>0$, then $m-1\gknf kp+s-1$ and
\[(\mchb \omega km )[k-1] = \omega p + s-1 = \mchb \omega k {(m-1)} . \]
Otherwise, $m-1\gknf k(p-1) + (k-1)$ and
\[(\mchb \omega km )[k-1] = \omega (p-1) + k-1 = \mchb \omega k {(m-1)} . \]
\end{proof}
}

Below, we note that $\bch \tau {^\omega}$ is defined according to the base of $\tau$; if $\tau \in \mathbb M_k$, then $\bch \tau {^\omega}$ is obtained by replacing every occurrence of $k$ by $\omega$, and if $\tau \in \mathbb M_\ell$, then $\bch \tau {^\omega}$ is obtained by replacing every occurrence of $\ell$ by $\omega$.
\shortproof{A routine induction on term complexity shows that if $\tau \in \mathbb T^\mathcal M_k$ and $\ell > k$, then $\bch{\bch \tau {^\ell}}{^\omega}= \bch \tau{^\omega}.$
From this, we readily obtain the following.}

\longproof{

\begin{lemma}\label{lemmMultPsi}
Fix $k\geq 2$.
If $\tau \in \mathbb T^\mathcal M_k$ and $\ell > k$, then $\bch{\bch \tau {^\ell}}{^\omega}= \bch \tau{^\omega}.$
\end{lemma}

\proof
By induction on $\norm \tau$.
The claim is clear for $\tau \in \{ 0,1\}$.
If $\tau = \sigma + \rho$, then
\begin{align*}
 \bch{  \bch \tau {^\ell}}{^\omega} &= \bch{\bch{(\sigma+\rho)}{^\ell}}{^\omega}  = \bch{\bch \sigma {^\ell} }{^\omega} + \bch{\bch \rho {^\ell} }{^\omega} \\
 & =^{\text{\sc ih}} \bch{ \sigma  }{^\omega} + \bch{ \rho  }{^\omega} = \bch {(\sigma + \rho) }{^\omega} = \bch  \tau {^\omega}. 
\end{align*}

Otherwise, $\tau = k \cdot \sigma$ for some $\sigma$, and
then
\begin{align*}
\bch{ \bch {\tau }{^\ell}}{^\omega}   & = \bch{ \bch {(k\cdot \sigma) }{^\ell}}{^\omega}
= \bch  {(\ell \cdot \bch {\sigma }{^\ell})} {^\omega} \\
&= \om \cdot \bch{ \bch { \sigma } {^\ell} }{^\omega}
 =^{\text{\sc ih}} \om \cdot \bch   \sigma {^\omega}  = \bch \tau {^\omega}. & 
\end{align*}
\medskip
\endproof
}
\begin{proposition}\label{propMBCH}
If $2\leq k < \ell$, then
$\bch{  \bch m {\bases k\ell }}{\bases \ell \omega}= \bch m {\bases k\omega}.$
\end{proposition}

\proof Write $  \tau := \nf km $. By Lemma \ref{lemmExpPreserve}, $ \nf\ell m  =  \bch \tau {^\ell}  $.
Hence,
\[\bch m {\bases k \omega} =  \bch \tau {^\omega}  =  \bch{ \bch \tau {^\ell}} {^\omega} =  \bch { \bch m {\bases k\ell } }{\bases \ell \omega }.  \]
\endproof

\begin{theorem}\label{theoGoodmult}
For every $m\in \mathbb N$ and $k\geq 2$, $\goodpt Mmk$ is finite and
\[ \goodpt Mmk +k  =  H_{\mchb \omega k m } (k ). \]
\end{theorem}

\begin{proof}
We use transfinite induction below $\omega^\omega$ on $\mchb {{\omega}}{{k }}m $.
The base case, where $m=0$, yields $k $ on both sides.
Otherwise, we use Lemma~\ref{lemmMFS} to see that
\begin{align*}
\goodpt Mmk  &  = 1+ \goodpt M{\mchb {{k+1}}{{k }}m-1}{k+1}  \\
& \stackrel{\text{\sc{ ih}}}= 1+  H_{\mchb \omega {{k+1}} {(\mchb {{k+1}}{{k }}m-1 )}} (k +1 ) - k-1\\
&=
 H_{\fs{(\mchb \omega k m)}{k  } } (k +1 ) - k  &\text{(by \ref{lemmMFS} and \ref{propMBCH})}\\
 & =   H_{\mchb \omega k m } (k ) - k  ,
\end{align*}
where we are justified in using the induction hypothesis since
\[\mchb \omega {{k+1}} {(\mchb {{k+1}}{{k }}m-1 )} = (\mchb \omega k{\mchb {{k+1}}{{k }}m} )[k-1 ]  = (\mchb \omega k{m} )[k-1 ] <  \mchb \omega k{m} . \]
\end{proof}

\begin{corollary}
${\sf I}\Sigma_1$ does not prove that for every $m\in \mathbb N$, $\goodt Mm$ is finite.
\end{corollary}

\begin{proof}
Let $a_{x} = 2^{2x }$.
Then it is not hard to check that $\mchb \om 2 {a_{x}} = \om^{2x }$, and $H_{\om^{2x }}(2) > H_{\om^{x }}(x+1) = H_{\om^\om}(x)$.
But then, by Theorem~\ref{theoGoodmult}, $\goodt M{a_x} + 2 > H_{\om^\om}(x)$.
It follows from the second item of Theorem \ref{theoHIndep} that ${\sf I}\Sigma_1$ does not prove that $\goodt  M{a_x} $ is finite for all $x$.
\end{proof}

\subsection{Termination of the classic Goodstein process}

Now we turn our attention to the classic Goodstein process based on $\mathcal E$.
Recall that in this context, $r\cdot\tau$ is shorthand for $\tau+\ldots +\tau$, $r$ times.

\reread

\begin{lemma}\label{lemmMCBound}
If $k\geq 2$ and $\tau$ is in base $k$ normal form then $\mc{\mchb \omega k \tau }<k$.
\end{lemma}

\begin{proof}
It suffices to observe that $r\cdot k^a + b$ cannot be in normal form for any $r\geq k$, since otherwise
\[r\cdot k^a + b \geq k^{a+1}. \]
With this and an easy induction on term complexity, we see that no term in normal form may contain coefficients greater than or equal to $k$.
\end{proof}

\shortproof{The following may readily be checked by induction on $m$, by writing $m = k^a+b$ in normal form and comparing $\mchb \omega {k} (m -1) $ to $\fs {(\mchb \omega {{k}} m  )}{k-1}$ according to Definition~\ref{defFS}.}

\begin{lemma}\label{lemmFSBound}
If $k\geq 2$ and $0< m\in\mathbb N$, then
\[ \fs {(\mchb \omega {{k }} m  )}{k-1} \leq  \mchb \omega {k } (m -1) <   \mchb \omega {k } m.\]
\end{lemma}

Note that in contrast to Lemma~\ref{lemmMFS}, we do not always obtain equality on the left, but this is enough to obtain a lower bound.
It is also worth remarking that  $\mchb \omega {k } (m -1) <   \mchb \omega {k } m$ for all $m>0$ is equivalent to the statement that if $n<m$, then $\mchb \omega {k } n <   \mchb \omega {k } m$ and thus the $\om$-base change is monotone, just as the finitary ones.
This lemma will allow us to compare Hardy hierarchies with the length of the standard Goodstein process.

\longproof{
\begin{proof}
By induction on $m$.
Write $\mchb {}{}{}$ for $\mchb {\omega}{k+1}{} $ and $m = (k+1)^a+b$ in normal form.
The claim is trivial for $m=0$, so we assume otherwise and consider the following cases.
\begin{Cases}
\item ($b>0$).
Then, $m -1  = k^\tau+(\sigma-1)$, and the induction hypothesis readily yields\footnote{In principle, the last inequality may be strict if $\omega^{\mchb {}{} a} +  \mchb {}{} b   $ is not in Cantor normal form; however, it can be proven that if $\tau$ is in normal form, then $\mchb {}{}\tau$ is in CNF.}
\[\mchb {}{} (m -1) = \omega^{\mchb {}{} a} + \mchb {}{} (b -1) \stackrel{\text{\sc ih}} \geq \omega^{\mchb {}{} a} + \fs {(\mchb {}{} b  )}k \geq   \fs {(\mchb {}{} m  )}k  ,\]
as well as
\[\mchb {}{} m =\omega^{\mchb {}{} a} + \mchb {}{}  b \stackrel{\text{\sc ih}}  > \omega^{\mchb {}{} a} +  \mchb {}{} (b -1) = \mchb {}{} (m -1).\]
\item ($b=0$).
Consider the following sub-cases.
\begin{Cases}

\item $(a=0) $. Then $m-1 = 0$ and $\mchb {}{} m
=1$, so
\[\mchb {}{} (m -1)
=0
= \fs {(\mchb {}{} m  )}k < \mchb {}{} m.\]

\item $(a>0)$.
By Lemma~\ref{lemmExpNFProp}.\ref{itExpNFPRopOne}, $m-1=(k-1)^a-1 \nnf \sum_{i=0}^{a-1} k\cdot (k+1)^{i}    $, so $\mchb{}{}(m-1) = \sum_{i=0}^{a-1} \omega ^{i}\cdot k  \geq \omega ^{a-1}\cdot k$.
By the induction hypothesis $\mchb {}{} a > \mchb {}{} (a-1)$, so
\begin{align*}
\mchb {}{} m & = \omega^{\mchb {}{}  a } > \omega^{\mchb {}{}  (a-1)}(k+1)> \mchb {}{} (m -1) \\
& \geq \omega^{\mchb {}{} (a -1)}\cdot  k \stackrel{\text{\sc ih}} \geq \omega^{\fs{(\mchb {}{} a)} k} \cdot  k \geq \fs{(\mchb {}{} m)} k,  
\end{align*}
where the last inequality can be checked to hold whether or not $\mchb {}{} a$ is a limit by inspection on Definition~\ref{defFS}.
\end{Cases}
\end{Cases}
\end{proof}
}
 
\begin{theorem}\label{theoGrowthExp}
For all $m\in\mathbb N$ and $x\geq 2$,
\[H_{\mchb \omega {x} m}(x ) \leq \goodp Em\infty x +x  \leq H_{\mchb \omega x m}(x+1 ). \]
\end{theorem}

\begin{proof}
The upper bound is immediate from Lemmas~\ref{lemmMCBound} and~\ref{lemmFSBound} and Proposition~\ref{propHUpper}, while the lower bound follows from Lemma~\ref{lemmFSBound} and Proposition~\ref{propHLower}.
\end{proof}

\begin{corollary}
$\sf PA$ does not prove that for every $m\in \mathbb N$, $\goodt Em$ is finite.
\end{corollary}

\begin{proof}
If this were provable in $\sf PA$, it would be provable in ${\sf I}\Sigma_n$ for some $n$.
For $k>0$, it would follow that the function $f$ given by $f(x) = \goodt E{2_{n+x+1}}+2$ is total, where $x_y$ denotes the superexponential function.
But $\bch {2_{n+x+1} }{\bases {2}\omega} = \omega_{n+x+1}$, and it is easy to check using Theorem~\ref{theoOrdMon} and an easy induction that $H_{\omega_{n+x+1}}(2) > H_{\omega_{n+1}}(x)$, so $f(x)>H_{\omega_{n+1}}(x)$, contradicting the second item of Theorem~\ref{theoHIndep}.
\end{proof}

\done

\section{Goodstein Walks}\label{secWalks}

In this section we introduce and study Goodstein walks.
These are Goodstein-like processes which are defined independently of a normal form representation; natural numbers may be written in an arbitrary way using the functions from $\mathcal F$.
Aside from this, the definition is analogous to that of standard Goodstein processes.

\begin{definition}
Fix a notation system $\mathcal F$.
A {\em Goodstein walk} (for $\mathcal F$) is a sequence $(m_i)_{i <\alpha}$, where $\alpha \leq \infty$, such that for every $i<\alpha$, there is a term $\tau _i \in \mathbb T^\mathcal F_{i+2}$ with $\val{\tau_i} = m_i$ and $m_{i+1} = \bch \tau {^{i+3}} - 1$.
\end{definition}

\begin{theorem}\label{theoWalk}
Let $\mathcal F$ be a normalized notation system with $+$ and $1$.
Suppose that $\mathcal F$ is base-change maximal, and that for every $m\in \mathbb N$ there is $i\in \mathbb N$ such that ${\rm G}^\mathcal F_i (m) = 0$.
Then, every Goodstein walk for $\mathcal F$ is finite.
\end{theorem}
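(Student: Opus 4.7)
The plan is to dominate any Goodstein walk $(m_i)_{i=0}^\alpha$ for $\mathcal F$ by the standard Goodstein sequence $(G^\mathcal F_i m_0)_i$ starting at the same initial value, by showing by induction on $i$ that $m_i \leq G^\mathcal F_i m_0$ for every $i<\alpha$ at which the Goodstein sequence is still defined. Since the Goodstein sequence is assumed to reach $0$ in finitely many steps, the walk cannot outlast it, giving the desired finiteness.

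The base case $i=0$ is immediate. For the inductive step, fix $i<\alpha$ and assume $m_i \leq G^\mathcal F_i m_0$. Since $i<\alpha$ forces $m_i\geq 1$ (otherwise $m_{i+1}$ would not be a natural number), we also have $G^\mathcal F_i m_0\geq 1$, so $G^\mathcal F_{i+1} m_0$ is defined. Let $\tau_i \in \mathbb T^\mathcal F_{i+2}$ be the term selected in the walk at step $i$. Base-change maximality applied to $\tau_i$ gives
\[ \val{\bch{\tau_i}{i+3}} \leq \val{\bch{\nf{i+2}{\tau_i}}{i+3}} = \bch{m_i}{\bases{i+2}{i+3}}. \]
The assumption that $+$ and $1$ lie in $\mathcal F$, combined with base-change maximality, brings Proposition \ref{propMaxToMon} into play, giving monotonicity of the base-change map. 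The inductive hypothesis then yields $\bch{m_i}{\bases{i+2}{i+3}}\leq \bch{G^\mathcal F_i m_0}{\bases{i+2}{i+3}}$, and subtracting $1$ gives $m_{i+1}\leq G^\mathcal F_{i+1} m_0$, closing the induction.

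By termination of the Goodstein sequence there is $N$ with $G^\mathcal F_N m_0=0$. If the walk continued all the way to step $N$, then $m_N\leq 0$ would force $m_N=0$, and since walks can only be extended from positive values, the walk must already have stopped by that point; either way $\alpha\leq N$ is finite. The argument is in essence the slogan that base-change maximality turns the normal form into the worst case for the walker, and that this worst case has been assumed to terminate. I do not anticipate a genuine obstacle; the only delicate point is to ensure that monotonicity of base change is genuinely available from the stated hypotheses, which is precisely what Proposition \ref{propMaxToMon} provides.
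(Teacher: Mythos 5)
Your proposal is correct and follows essentially the same argument as the paper: an induction showing $m_i \leq {\rm G}^\mathcal F_i m_0$, with base-change maximality bounding $\val{\bch{\tau_i}{i+3}}$ by $\bch{m_i}{i+3}$ and Proposition \ref{propMaxToMon} supplying the monotonicity step, after which termination of the Goodstein sequence bounds the length of the walk. The extra remarks about $m_i\geq 1$ and the walk stopping at zero are fine but not a substantive difference.
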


\proof
Let $\mathcal F$ satisfy the assumptions of the theorem and $(m_i)_{i=0}^\alpha$ be a Goodstein walk for $\mathcal F$.
Let $m = m_0$.
By induction on $i$, we check that $m_i \leq {\rm G}^\mathcal F_i (m)$.
For the base case this is clear. Otherwise, $m_{i+1} = \val{\bch{\tau_i}{^{i+3}}} - 1$ for some term $\tau_i \in \mathbb T^\mathcal F_{i+2}$, and thus
\[m_{i+1} = \val{\bch{\tau_i}{^{i+3}}} - 1 \leq \bch{m_i}{^{i+3}_{i+2}} - 1 \stackrel{\text{\sc ih}}\leq \bch{{\rm G}^\mathcal F_i (m)}{^{i+3}_{i+2}} - 1 = {\rm G}^\mathcal F_{i+1} (m),\]
where the second inequality uses Proposition \ref{propMaxToMon} and the assumption that $\mathcal F$ is base-change maximal.
Thus if we choose $i$ such that ${\rm G}^\mathcal F_i (m) =0 $, we must have $\alpha \leq i $.
\endproof

As a corollary, we obtain the following extension of Goodstein's theorem.

\begin{theorem}\label{theoWalk}
Any Goodstein walk for $\mathcal M$, $\mathcal E$, or $\mathcal L$ is finite.
\end{theorem}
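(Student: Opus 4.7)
The plan is to apply the preceding meta-theorem (the general Theorem asserting that base-change maximality plus termination of the standard process yields finiteness of all walks) to each of the two notation systems in turn. In each case I need to verify two hypotheses: base-change maximality, and that $\mathrm G^{\mathcal F}_i m = 0$ eventually holds for every $m$.

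For $\mathcal L$, the first hypothesis is exactly Theorem~\ref{theoElemMax}: for any $\tau \in \mathbb L_k$ and $\ell \geq k \geq 2$, one has $\val{\bch{\tau}{\ell}} \leq \val{\bch{\nf{k}{\tau}}{\ell}}$. For termination, I would note that the normal form assignment for $\mathcal L$ is the hereditary exponential one defined in Section~\ref{secClassical}, whose output lies in $\mathbb E_k \subset \mathbb L_k$; consequently the Goodstein sequence $(\mathrm G^{\mathcal L}_i m)_{i}$ coincides step-for-step with the classical sequence $(\mathrm G^{\mathcal E}_i m)_{i}$, so termination reduces to Goodstein's original result (Theorem~\ref{theoGood}). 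For $\mathcal A$, both hypotheses are already on the shelf: base-change maximality is Theorem~\ref{theoAckMax}, and termination of $\mathrm G^{\mathcal A}$ is Theorem~\ref{theoMainAck}. So invoking the previous theorem twice delivers the conclusion.

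There is no real obstacle left at this stage; the genuine work has already been done in Sections~\ref{secExponential}--\ref{secAckermannian}. What is conceptually notable, and worth flagging in the exposition, is that for $\mathcal L$ the \emph{exponential} normal forms remain base-change maximal even when the walker is allowed to freely introduce multiplication along the way. This is exactly what makes multiplication `invisible' to the walk argument: any elementary term with value $m$ in base $k$ can only grow by a base change to at most what the canonical exponential normal form of $m$ yields, and then monotonicity (Proposition~\ref{propMaxToMon}) pins the walk pointwise below the standard Goodstein sequence.
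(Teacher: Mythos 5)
Your proposal is correct and is essentially the paper's own argument: the paper derives this statement as an immediate corollary of the general walk theorem, with base-change maximality supplied by Theorem~\ref{theoElemMax} (for $\mathcal L$, whose normal forms are the hereditary exponential ones, so termination reduces to the classical Theorem~\ref{theoGood}) and by Theorem~\ref{theoAckMax} together with Theorem~\ref{theoMainAck} (for $\mathcal A$). Your observation that the walk is pinned pointwise below the standard Goodstein sequence via Proposition~\ref{propMaxToMon} is exactly the mechanism in the paper's proof of the general theorem.
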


\begin{example}
Consider alternative normal forms based on $\mathcal L$ as follows. Let $k\geq 2$ and $m\geq 0 $.
First, set $\nf k0 = 0$.
For $m > 0$, let $p_ 1 \cdots p_n$ be the decomposition of $m$ into prime factors.
If $n\leq 1 $, we write $m = k^r + b $ with $m<k^{r+1}$ (as in the exponential normal forms) and set $\nf km = k^{\nf kr} + \nf kb$.
Otherwise, set $\nf k m = \nf k{p_1} \cdots \nf k{p_n}$.

These normal forms do not have the natural structural properties that are useful in a direct proof of termination.
For example, $7 \nfpar 3 3^1 + 3^1 + 1$ and $8 \nfpar 3 2\cdot 2\cdot 2$.
It follows that $\bch 7{\bases 34} = 4^1 + 4^1 + 1 = 9 $ and $\bch 8{\bases 34} = 2\cdot 2\cdot 2 = 8 $.
Thus the base-change operator is not monotone, in the sense that the analogue of Corollary \ref{corExpMon} fails.
Similarly, the natural ordinal assignment would not be monotone, as it would yield $\bch 7 {\bases 3\omega} = \omega \cdot 2 + 1 $ and $\bch 8 {\bases 3\omega} = 8$.
Without these monotonicity properties, a termination proof as given in Section \ref{secWeak} would not go through.
Nevertheless, the Goodstein process based on these normal forms is terminating by Theorem \ref{theoWalk}, and such a direct proof is not needed.
\end{example}

\section{Phase transitions}\label{secPhase}

We have defined general Goodstein sequences $\goodp {{}}mir$, where $r\geq 2$ is the base of the first term.
In this section, we aim to find weakenings of this statement provable in ${\sf I}\Sigma_n$.
The strategy is to bound the value of $n$, but for any fixed $r$ and $N$, the statement $\forall m<N (\goodp {{\mathcal E}} m\infty r<\infty)$ is provable in ${\sf I}\Sigma_1$ (or even weaker systems) since it can be proved by checking finitely many instances.
Instead, we may let $r$ vary, and moreover have $N$ depend on $r$.
Specifically, we will set $N=r_k$, where we recall that $x_y$ denotes the superexponential function.
The provability of the termination of such restricted Goodstein processes in ${\sf I}\Sigma_n$ depends on whether $k\leq n$.

\begin{theorem}\label{theoPhase}
For $k\geq 1$, let $\varphi_k$ be the statement
\begin{quote}
For every $r\geq 2$ and every $m<r_k$, $\goodp E {m}\infty {r}$ is finite.
\end{quote}
Then, for every $n,k\geq 1$, ${\sf I}\Sigma_n\vdash \varphi_k$ if and only if $k \leq n $.
\end{theorem}

\begin{proof}
Fix $n,k\geq 1$.
Let $r \geq 2$ and $m<r_k$.
By Lemma~\ref{lemmFSBound}, $\mchb \omega r m < \mchb \omega r{r_k} = \omega_k$.

By Theorems~\ref{theoGrowthExp} and~\ref{theoOrdMon}, along with $\mc{ \mchb \omega r m}<r$ by Lemma~\ref{lemmMCBound}, $\goodp E m\infty {r}<H_{ \mchb \omega r m } (r+1) < H_{\omega_k}(r+1 )$.
Moreover, we established these inequalities using elementary means, so they are provable in ${\sf I}\Sigma_n$.
If $k \leq n $, from the provable totality of $H_{\omega_k}$ in ${\sf I}\Sigma_n$ (Theorem~\ref{theoHIndep}), we conclude that $\varphi_k$ is provable in ${\sf I}\Sigma_n$.

If $k>n $, let $m=(r+1)_{k}-1$.
Note that $H_{\om_k}(r)=H_{\fs{\om_k} r }(r+1) $.
By Lemma~\ref{lemmFSBound}, $\mchb \omega {r+1} m \geq \fs{\om_k} r =  \om_k r$.
Since $\mc{\om_k r}=r$, in view of Theorems~\ref{theoOrdMon} and~\ref{theoGrowthExp}, it follows that $H_{\fs{\om_k} r}(r  ) < H_{\bch m{\bases {r+1}\om}}(r+1)\leq  \goodp E {m}\infty {r+1} +r+1$.
Thus the function
\[r\mapsto \goodp E {(r+1)_k-1}\infty {r+1}+r+1\]
grows faster than $H_{\omega_{n + 1}}(r  )$, hence once again by Theorem~\ref{theoHIndep}, the statement $\forall r \ \big ( \goodp E {r_{k}-1}\infty {r}<\infty \big )$ is not provable in ${\sf I}\Sigma_n$.
Since clearly $ {r_{k}-1}<r_k$, neither is $\varphi_k$. 
\end{proof}

\section{Concluding remarks}
We have explored two notions of `optimality' for notations for Goodstein processes.
The first, norm minimality, is naturally motivated, but we have seen that it may fail for some otherwise well-behaved normal forms; specifically, for the notation system $\mathcal L$ and the standard Goodstein normal forms.
The second, base-change maximality, is perhaps more subtle but leads to some very interesting consequences: most notably, it allows for `normal form-free' Goodstein processes.
In the context of $\mathcal M$ and $\mathcal E$, normal forms are simple enough that the benefit of eliminating them is debatable.
However, already for elementary terms we have that the standard normal forms are not norm-minimizing, and it is unclear if norm-minimizing terms will lead to a Goodstein process with a natural ordinal interpretation.
Thus it is surprising that such an ordinal interpretation -- or even identifying norm-minimizing normal forms to begin with -- is not needed to establish the termination of {\em any} Goodstein processes based on this notation system.

However, for more powerful notation systems, the elimination of normal forms becomes more pressing, and here the base-change maximality technique is crucial.
For notation systems based on the Ackermann function~\cite{AckermannGoodstein}, normal forms are already quite cumbersome, so normal form-free Goodstein principles would lead to substantially more accessible independent statements.
We have already applied base-change maximality to fast-growing hierarchies~\cite{FastGoodstein}, where once again normal forms can be quite complex.
We believe that, going forward, the analysis of norm-minimality and, especially, base-change maximality will become an essential ingredient in the study of new and ever-more-powerful Goodstein principles.

\subsection*{Acknowledgements}

The authors would like to thank the anonymous referees who made many useful suggestions, including spotting several mathematical errors in previous versions of this article.

The authors of this work were supported by the FWO-FWF Lead Agency grant G030620N (FWO)/I4513N (FWF).



\end{document}